\newtheorem{theorem}{Theorem}[section]
\newtheorem{lemma}[theorem]{Lemma}
\newtheorem{cor}[theorem]{Corollary}
\newtheorem{question}[theorem]{Question}
\theoremstyle{definition}
\newtheorem{definition}[theorem]{Definition}
\theoremstyle{remark}
\newtheorem{remark}[theorem]{Remark}
\numberwithin{equation}{section}
\newcommand{\RR}{{\mathbb R}}
\newcommand{\QQ}{{\mathbb Q}}
\newcommand{\ZZ}{{\mathbb Z}}
\newcommand{\g}{\mathfrak{g}}
\begin{document}
\date{}
\title{Only rational homology spheres \\ admit $\Omega(f)$ to be union of DE attractors}
\author{Fan Ding, Jianzhong Pan, Shicheng Wang and Jiangang Yao}
\maketitle

\begin{abstract}
If there exists a diffeomorphism $f$ on a closed, orientable
$n$-manifold $M$ such that the non-wandering set $\Omega(f)$
consists of finitely many orientable ($\pm$) attractors derived from
expanding maps, then $M$ is a rational homology sphere; moreover all
those attractors are of topological dimension $n-2$.

Expanding maps are expanding on (co)homologies.
\end{abstract}
\maketitle

\tableofcontents

\section{Introduction }
\subsection{Results, motivations and outline of the proof}

Hyperbolic attractors derived from expanding maps, which are also
called as (generalized) Smale solenoids or solenoid attractors,  are
introduced into dynamics by Smale in his celebrated paper \cite{Sm}.
For a diffeomorphism $f$ with Axiom A, which has fundamental
importance in the study of various stabilities in dynamics, Smale
proved the Spectral Decomposition Theorem, which states that
$\Omega(f)$, the non-wandering set of $f$, can be decomposed into
the so-called \textit{basic sets}. He posed several types of basic
sets: \emph{Group 0} which are zero
 dimensional ones
  such as isolated points or the Smale horseshoes; \emph{Group A} and \emph{Group DA},
  both of which are derived from Anosov maps; and
  \emph{Group DE} which are attractors derived from expanding maps.

In the same paper [Sm], Smale posed the following conjecture for
Anosov map.

{\bf Conjecture:} {\it If $f:M\to M$ is an Anosov diffeomorphism,
then the non-wandering set $\Omega(f)=M$.}

For Anosov flows on 3-manifolds, the corresponding problem has a
negative answer given by Franks and Williams [FW]. Inspired by the
construction of [FW], experts in dynamics recognized if one can
construct a dynamics $f:M\to M$ on $2k$-manifold $M$ such that
$\Omega(f)$ consists of one attractor and one repeller which are
derived from expanding maps of type $(k,k)$ (cf. Definitions
\ref{pq} and \ref{pq2}, a repeller of $f$ is an attractor of
$f^{-1}$) with additional transversality condition on stable and
unstable manifolds, the conjecture will also have a negative answer.
This provides a direct motivation to prove the following Theorem
\ref{main1}, which implies
 no dynamics $f$ can make $\Omega(f)$ to be one
 attractor and one repeller derived from expanding maps of type $(k,k)$, at least
in the orientable category (Corollary \ref{main2} (c)).

A basic and elementary fact for a diffeomorphism $f$ on a closed
manifold is that, if the non-wondering set $\Omega(f)$ consists of
finitely many attractors and repellers derived from expanding maps,
then $\Omega(f)$ must consist of exactly one attractor and one
repeller (see [JNW] Lemma 1 for a proof). For brevity we often call
attractors and repellers derived from expanding maps as $(\pm)$
attractors derived from expanding maps.

\begin{theorem}
\label{main1} If there exists a diffeomorphism $f: M \to M$ on a
closed, oriented $n$-manifold $M$ such that $\Omega(f)$ consists of
finitely many oriented $(\pm)$ attractors derived from expanding
maps, then $M$ is a rational homology sphere. Moreover all those
attractors are of type $(n-2, 2)$.
\end{theorem}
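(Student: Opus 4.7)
My plan is to combine a Mayer--Vietoris argument over the attractor and repeller basins with the key fact emphasized in the abstract that an expanding map acts by expansion on rational cohomology.

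First, by the [JNW] observation one has $\Omega(f)=\Lambda_+\sqcup\Lambda_-$ with $\Lambda_+$ a DE attractor of type $(u_+,n-u_+)$ and $\Lambda_-$ a DE repeller, i.e.\ an attractor of $f^{-1}$ of type $(u_-,n-u_-)$. A smooth Lyapunov function for $f$ cuts $M$ along a closed $(n-1)$-submanifold $\Sigma$ into compact codimension-zero pieces $N_\pm$ with $\partial N_\pm=\Sigma$, $f(N_+)\subset\operatorname{int}N_+$, and $f^{-1}(N_-)\subset\operatorname{int}N_-$. By Williams' structure theorem for DE attractors, $N_\pm$ deformation retracts onto an oriented branched $u_\pm$-manifold $K_\pm$ carrying an expanding self-map $g_\pm$ homotopic to $f^{\pm 1}|_{N_\pm}$ under the retraction, with $\Lambda_\pm=\varprojlim(K_\pm,g_\pm)$. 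In particular $H^k(N_\pm;\QQ)=0$ for $k>u_\pm$; the endomorphism $(f|_{N_+})^*$ on $H^k(N_+;\QQ)$ has all eigenvalues of absolute value $>1$ for $k\ge 1$; and dually $f^*$ on $H^k(N_-;\QQ)$ has all eigenvalues of absolute value $<1$ for $k\ge 1$ (because $(f^{-1}|_{N_-})^*=g_-^*$ is expanding).

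Next, take a generalized eigenvector $v\in H^k(M;\CC)$ of $f^*$ with eigenvalue $\lambda$ and $0<k<n$. Naturality of $f^*$ with respect to the inclusions $i_\pm:N_\pm\hookrightarrow M$ forces $i_+^*(v)=0$ unless $|\lambda|>1$ and $i_-^*(v)=0$ unless $|\lambda|<1$. Coupled with Poincar\'e duality on $M$ (which pairs the $f^*$-spectrum on $H^k$ with that on $H^{n-k}$ via $\lambda\mapsto\pm\lambda^{-1}$), the Mayer--Vietoris sequence
\[
\cdots\to H^{k-1}(\Sigma;\CC)\to H^k(M;\CC)\to H^k(N_+;\CC)\oplus H^k(N_-;\CC)\to H^k(\Sigma;\CC)\to\cdots
\]
can be analyzed eigenvalue by eigenvalue: vectors with $|\lambda|>1$ inject into $H^k(N_+)$, vectors with $|\lambda|<1$ inject into $H^k(N_-)$, and vectors with $|\lambda|=1$ come from the connecting map, where a further argument using the Gysin sequence of the sphere-bundle structure $\Sigma\to K_\pm$ (valid off the branch locus) propagates the expansion/contraction dichotomy. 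The upshot is $H^k(M;\QQ)=0$ for $0<k<n$, so $M$ is a rational homology sphere.

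Finally, for the type statement $(n-2,2)$, I combine the rational sphericity of $M$ just established with Poincar\'e--Lefschetz duality $H^k(N_\pm;\QQ)\cong H^{n-k}(N_\pm,\Sigma;\QQ)$ and the Gysin sequences of the sphere-fibrations $\Sigma\to K_\pm$ of fiber dimensions $n-u_\pm-1$. The expansion of $g_\pm^*$ on the top-degree rational cohomology of $K_\pm$ (multiplication by the mapping degree, of absolute value $>1$) gives a numerical constraint which, combined with rational sphericity of $M$, forces both sphere-fiber dimensions to equal $1$, i.e.\ $u_+=u_-=n-2$. The main obstacle I anticipate is the middle step: transporting the clean expanding-versus-contracting dichotomy on the basin cohomologies through the Mayer--Vietoris connecting map into $H^*(M)$ is delicate because $\Sigma$ is invariant under neither $f$ nor $f^{-1}$, and controlling the $f^*$-action on $H^*(\Sigma;\QQ)$ requires a careful use of the Gysin sequences on both sides.
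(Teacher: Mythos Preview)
Your overall strategy—Mayer–Vietoris over the two basins plus the expanding action on cohomology—is the paper's strategy, but the steps you leave as sketches are exactly where the substantive work lies, and your outline misidentifies the mechanism at two points.

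First, the step you yourself flag as the main obstacle is handled in the paper not by propagating an eigenvalue dichotomy through the connecting map, but by showing that the inclusion-induced maps $s_j: H_l(N_j;\ZZ)\to H_l(M;\ZZ)$ are \emph{identically zero} for every $l>0$ (Lemma~\ref{expend5} and Lemma~\ref{non-surjective}). The point is integral: if $s_j\neq 0$, its image is an $f_*$-invariant sublattice on which $f_*$ acts expandingly, forcing $|\det f_*|>1$; but $f_*$ is an automorphism of $H_l(M;\ZZ)$, so $|\det f_*|=1$. Your eigenvalue argument over $\QQ$ or $\CC$ cannot see this, since an automorphism over a field may well have eigenvalues of modulus $>1$, and you are then stuck with the possibility that the $|\lambda|>1$ part of $H^k(M)$ is nonzero and genuinely injects into $H^k(N_+)$. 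Once $s_1=s_2=0$, the map $\varphi_l:H_l(P)\to H_l(N_1)\oplus H_l(N_2)$ is surjective for all $l>0$, and no analysis of $f^*$ on $H^*(\Sigma)$ is required. Separately, the paper works with the defining disk bundles $N_j=X_j\tilde\times D^{q_j}$ over genuine closed manifolds $X_j$ straight from Definition~\ref{pq2}, so Williams' branched-manifold machinery is unnecessary and the Gysin sequence applies cleanly.

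Second, the type constraint $q_1=q_2=2$ is not deduced from rational sphericity; in the paper it is established \emph{first}, and rational sphericity is then a consequence. If some $q_j\ge 3$, the vanishing of the Euler class of $N_j$ (Lemma~\ref{euler0}: $\varphi^*e=e$ with $\varphi^*$ expanding forces $e=0$) gives $H_l(\partial N_j)\cong H_l(X_j)\oplus H_{l-q_j+1}(X_j)$, the second summand spanned by $S^{q_j-1}$-bundles over cycles of $X_j$. Since both $N_1$ and $N_2$ are $K(\pi,1)$'s (Theorem~\ref{exp}(a)) and $q_j-1\ge 2$, these sphere-bundle classes die under inclusion into either $N_i$ (Lemma~\ref{shrink}). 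After identifying $H_l(P)\cong H_l(\partial N_j)$ via Lemmas~\ref{onebundle} and~\ref{twobundles}, a dimension count at $l=p_1$ shows $\varphi_l$ cannot be surjective, contradicting the previous paragraph. Only with $q_1=q_2=2$ does a chain of Betti-number inequalities force $\beta_i(X_j)\equiv 1$ and hence $H_l(M;\QQ)=0$ for $0<l<n$. Your sketch omits the $K(\pi,1)$ ingredient entirely, and without it there is no obstruction to $q_j\ge 3$.
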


\begin{cor}\label{main2}
Let $M$ be a closed oriented $n$-manifold.  If there is a
diffeomorphism $f: M \to M$ such that $\Omega(f)$ consists of
finitely many oriented $(\pm)$ attractors derived from expanding
maps, then

(a) the dimension $n$ of the manifold cannot be 4;

(b) if the dimension $n$ is greater than $4$, those $(\pm)$
attractors  cannot be both derived from expanding maps on tori;

(c) those attractors cannot be both of type $(k, k)$.
\end{cor}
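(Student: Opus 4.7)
The plan is to reduce all three parts to Theorem \ref{main1}, which tells us that $M$ is a rational homology $n$-sphere and each $(\pm)$ attractor has type $(n-2, 2)$. Let $\Lambda^+$ denote the attractor and $\Lambda^-$ the repeller of $f$, so that $\Lambda^-$ is an attractor of $f^{-1}$. Applying Theorem \ref{main1} to $f$ and to $f^{-1}$ shows that $\Lambda^+$ and $\Lambda^-$ are each of type $(n-2, 2)$; consequently, if both were of type $(k,k)$, matching types would force $k = n-2$ and $k = 2$, i.e.\ $n = 4$. Hence (c) reduces to (a), and the real work is concentrated in (a) and (b).

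For (a) I would argue by contradiction: assume $n = 4$, so $M$ is a rational homology $4$-sphere and $\Lambda^+, \Lambda^-$ are both $2$-dimensional. The open basins $U^\pm$ satisfy $U^- = M \setminus \Lambda^+$, $U^+ = M \setminus \Lambda^-$, and $M = U^+ \cup U^-$. Alexander duality in a rational homology $n$-sphere yields
\[
\check{H}^k(\Lambda^\pm; \QQ) \cong H_{n-k-1}(U^\mp; \QQ).
\]
Writing $\Lambda^\pm$ as the inverse limit of an expanding self-map $g^\pm$ of a branched $2$-complex $K^\pm$, its \v{C}ech cohomology is a direct limit $\varinjlim(H^*(K^\pm; \QQ), (g^\pm)^*)$ under a pull-back whose eigenvalues all have modulus $> 1$. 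Feeding the resulting ranks into the Mayer--Vietoris sequence for $M = U^+ \cup U^-$, and using both $H_*(M;\QQ) = H_*(S^4;\QQ)$ and the expanding action of $f_*$ on $\check{H}^*(\Lambda^\pm;\QQ)$ (the abstract's remark that expanding maps are expanding on cohomology), I would aim to force an incompatibility between the direct-limit ranks imposed by the expanding dynamics and the thin rational cohomology of a homology $4$-sphere.

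For (b), when $n > 4$ and both $\Lambda^\pm$ come from expanding maps on tori $T^{n-2}$, the \v{C}ech cohomology is explicitly $\varinjlim(\bigwedge^* \QQ^{n-2}, (g^\pm)^*)$ with $(g^\pm)^*$ induced by an integer matrix whose eigenvalues all have modulus $> 1$. Passing through the same Alexander duality and Mayer--Vietoris machinery, the symmetry of having a torus on \emph{both} sides produces nonzero $H_*(U^\pm;\QQ)$ in degrees strictly between $0$ and $n$; these must paste together to give nonzero $H_k(M;\QQ)$ for some $0 < k < n$, contradicting Theorem \ref{main1}.

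The main obstacle will be (a): because $\Lambda^+$ and $\Lambda^-$ have the \emph{same} topological dimension when $n = 4$, Alexander duality relates them symmetrically, so no automatic rank mismatch appears. The argument must genuinely use the expanding action of $f_*$ on the \v{C}ech cohomology of the attractors, not just their dimensions, to defeat the rational homology sphere conclusion of Theorem \ref{main1}.
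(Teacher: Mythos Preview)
Your reduction of (c) to (a) via Theorem~\ref{main1} is correct and matches the paper. The rest of the proposal, however, has a genuine gap and misses the key idea that makes both (a) and (b) short.

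For (a) you never identify the base manifolds. Once Theorem~\ref{main1} gives $q_1=q_2=2$, the base manifolds $X_1,X_2$ are closed orientable surfaces admitting expanding maps; by Theorem~\ref{exp}(a) (or Theorem~\ref{class}) their universal cover is $\RR^2$, so each $X_j$ is the torus $T^2$. This single observation reduces (a) to the case $n=4$ of (b), and is what the paper does. Your plan instead models $\Lambda^\pm$ as inverse limits over ``branched $2$-complexes'' (the Williams picture, not the DE picture---here the inverse system is simply $(X_j,\varphi_j)$), and then concedes that ``no automatic rank mismatch appears'' and that the argument ``must genuinely use the expanding action'' without saying how. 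In fact a rank mismatch \emph{does} appear, precisely because $X_j=T^2$.

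For (b) the paper's argument is a one-line dimension count inside the machinery already built for Theorem~\ref{main1}: with $X_1=X_2=T^{n-2}$ one has $\beta_1(N_j)=n-2$ and, by Lemma~\ref{euler0}(b) together with Lemma~\ref{twobundles}, $\dim H_1(P)=\beta_1(\partial N_j)=n-1$. Lemma~\ref{non-surjective} forces
\[
\varphi_1:H_1(P)\longrightarrow H_1(N_1)\oplus H_1(N_2)
\]
to be surjective, but $n-1<2(n-2)$ for every $n\ge 4$, a contradiction. Your Alexander duality/\v{C}ech cohomology route, once made precise (the basins $U^\pm$ retract onto $N_1,N_2$ and $U^+\cap U^-$ retracts onto $P$), collapses to exactly this Mayer--Vietoris sequence; the detour through direct limits and the $f_*$-action on $\check H^*(\Lambda^\pm)$ is unnecessary and you never carry it to a contradiction. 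The missing ingredient is not more dynamics but the surjectivity statement of Lemma~\ref{non-surjective} combined with the Betti numbers of $T^{n-2}$.
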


There is a diffeomorphism $f$ on a rational homology $3$-sphere such
that $\Omega(f)$ consists of finitely many $(\pm)$ attractors of
type $(1, 2)$ derived from expanding maps. Indeed recently
\cite{JNW} showed that there exists a diffeomorphism $f: M\to M$ on
a closed orientable 3-manifold $M$ with $\Omega(f)$ a union of
finitely many $(\pm)$ attractors derived from expanding maps if and
only if $M$ is a lens space $L(p,q)$.

We would like to thank Professors L. Wen and C. Bonatti for passing
the information around Smale's conjecture  for Anosov maps once they
heard the work \cite{JNW} and asking us if there is a diffeomorphism
$f: M \to M$ on a closed $4$-manifold $M$ such that $\Omega(f)$
consists of two ($\pm$) attractors derived from expanding maps of
type $(2,2)$.

\bigskip
A more primary motivation for working on Theorem \ref{main1} and
\cite{JNW} is that the information of hyperbolic non-wandering set
$\Omega( f)$ should provide topological information of the manifold
$M$. Both Theorem \ref{main1} and results in \cite{JNW} can be
considered as an analog or  a generalization of the classical result
in mathematics that if there exists a diffeomorphism $f: M \to M$ on
a closed, orientable $n$-manifold $M$ with $\Omega(f)$ a union of
finitely many isolated attractors, then $M$ must be the $n$-sphere.

Stimulated by \cite{JNW} and Corollary \ref{main2} (a), it is
natural to ask
\begin{question}
For which positive integer $n$, there is a dynamics $f: M \to M$ on
a closed $n$-manifold $M$ with $\Omega(f)$ a union of finitely many
$(\pm)$ attractors derived from expanding maps?
\end{question}

To prove Theorem  \ref{main1}, besides to use ideas and methods from
algebraic and geometric topology, as well as dynamics, we also need
the following result, which is of independent interests.

\begin{theorem}\label{main-exp} Let $f: X \to X$ be an
expanding map on the closed oriented manifold $X$, then the induced
homomorphisms $f_*:H_l(X,\RR)\to H_l(X,\RR)$ and $f^*:H^l(X,\RR)\to
H^l(X,\RR)$ are both expanding for any positive integer $l$.
\end{theorem}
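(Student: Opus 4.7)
My strategy is to reduce Theorem \ref{main-exp} to a Lie-algebraic computation by invoking the classical structure theorem for expanding maps. The key input is the Shub--Gromov classification: every closed manifold $X$ admitting an expanding self-map is homeomorphic to an infra-nilmanifold $\Gamma\backslash G$ with $G$ a simply connected nilpotent Lie group, and the map $f$ is topologically conjugate to the map induced by an expanding Lie-group endomorphism $\phi\colon G\to G$ normalizing $\Gamma$. Since topological conjugacy preserves the induced maps on (co)homology with real coefficients, it is enough to analyse $\phi^{*}$ and $\phi_{*}$.

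Next I would pass to the finite nilmanifold cover $p\colon Y=N\backslash G\to X$ with $N=\Gamma\cap G$. Here $\phi$ descends to an endomorphism $\bar{f}\colon Y\to Y$ covering $f$, and the finite deck group $F=\Gamma/N$ commutes with $\bar{f}^{*}$ on $H^{*}(Y,\RR)$. Consequently $H^{l}(X,\RR)=H^{l}(Y,\RR)^{F}$ sits as a $\bar{f}^{*}$-invariant subspace, and its eigenvalue multiset is a sub-multiset of the eigenvalue multiset on the cover. It therefore suffices to establish the expanding spectrum on the nilmanifold $Y$.

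On $Y$ I would apply Nomizu's theorem, which identifies $H^{*}(Y,\RR)$ with the Chevalley--Eilenberg cohomology $H^{*}(\mathfrak{g},\RR)$ of the Lie algebra $\mathfrak{g}$ of $G$, computed from the complex $(\Lambda^{\bullet}\mathfrak{g}^{*},d_{CE})$, and identifies the induced action of $\phi$ with the exterior-algebra extension of $(d\phi_{e})^{*}\colon\mathfrak{g}^{*}\to\mathfrak{g}^{*}$. The expanding property of $\phi$ with respect to a left-invariant Riemannian metric on $G$ is equivalent to every complex eigenvalue of $d\phi_{e}$ having modulus $>1$. Hence the induced operator on $\Lambda^{l}\mathfrak{g}^{*}$ has eigenvalues of the form $\mu_{i_{1}}\cdots\mu_{i_{l}}$, products of $l$ eigenvalues of $d\phi_{e}$; for every $l\ge 1$ each such product has modulus $>1$. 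Since $H^{l}(\mathfrak{g},\RR)$ is the $(d\phi_{e})^{*}$-equivariant subquotient $Z^{l}/B^{l}$ of $\Lambda^{l}\mathfrak{g}^{*}$, its eigenvalue multiset is a sub-multiset of that on $\Lambda^{l}\mathfrak{g}^{*}$, so all its eigenvalues have modulus $>1$. For the homological statement, the $\RR$-linear Kronecker pairing identifies $H_{l}(X,\RR)$ with $H^{l}(X,\RR)^{*}$ and turns $f_{*}$ into the transpose of $f^{*}$; the two therefore share the same spectrum, and the conclusion for $f_{*}$ follows.

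The principal obstacle is the first step: the Shub--Gromov classification rests on Gromov's polynomial-growth theorem and is far from elementary, and one must verify that topological conjugacy suffices to transport the statement from $f$ to its algebraic model (it does, because the claim is purely about real (co)homology). Once on a nilmanifold, the Nomizu identification and the eigenvalue bookkeeping on the Chevalley--Eilenberg complex are essentially formal linear algebra, and the infra-nil-to-nil descent can only restrict --- not enlarge --- the spectrum, so no loss occurs there.
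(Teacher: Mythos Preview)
Your proposal is correct and follows essentially the same route as the paper: reduce via Gromov's theorem to an infra-nil-endomorphism, lift to the finite nilmanifold cover, invoke Nomizu's identification with Chevalley--Eilenberg cohomology, and then do the eigenvalue bookkeeping on $\Lambda^{\bullet}\mathfrak{g}^{*}$ together with duality. The only cosmetic difference is in the descent step: you use the transfer identification $H^{*}(X,\RR)\cong H^{*}(Y,\RR)^{F}$ to view the spectrum on $X$ as a sub-multiset of that on $Y$, whereas the paper instead works on homology and uses that the covering projection $p_{2*}$ is surjective together with a ``surjective image of expanding is expanding'' lemma.
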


\bigskip
Theorem \ref{main-exp} is proved in \S 2 based on two theorems by
Gromov and by Nomizu respectively: any expanding map is conjugate to
an infra-nil-automorphism (Theorem~\ref{class}); for each
nilmanifold $N$, its De Rham cohomology is isomorphic to the
cohomology of the Chevalley-Eilenberg complex associated with the
Lie algebra of the simply connected nilpotent Lie group covering $N$
(Theorem \ref{Nomizu}).

Theorem \ref{main1} is proved in \S 3 with the outline as follows.
 It is known that if
$\Omega(f)$ consists of finitely many ($\pm$) attractors derived
from expanding maps, then $\Omega(f)$ must consist of two such
attractors $S_1$ and $S_2$ with
$$S_1=\bigcap_{h=1}^{\infty} f^h(N_1), \  S_2=\bigcap_{h=1}^{\infty} f^{-h}(N_2),$$
where   $N_1\cong X_1^{p_1} \tilde{\times} D^{q_1}$ and $N_2 \cong
X_2^{p_2} \tilde{\times} D^{q_2}$ are the defining disk bundles of
$S_1$ and $S_2$ respectively. We may choose $N_1$ and $N_2$ so that
$$M= N_1 \cup N_2 , \  P:= N_1 \cap N_2 \ {\rm with}\
\partial P= \partial N_1 \cup \partial
N_2.$$

Consider the Mayer-Vietoris long exact sequence for the pair $(N_1,
N_2):$ $$\label{star} H_l(P) \xrightarrow{\varphi_l=(r_1, r_2)}
H_l(N_1) \oplus H_l(N_2) \xrightarrow{\psi_l= s_1 -s_2} H_l(M).
$$
Since $f|N_1$ descends to an expanding map on $X_1$, $(f|N_1)_*$ is
expanding on $H_l(N_1)$ by Theorem \ref{main-exp}. While $f_*$ must
be an automorphism of $H_l(M)$, thus $s_1$ must be zero
(Lemma~\ref{expend5}). Similarly $s_2$ is zero by considering
$f^{-1}$, and therefore $\psi_l$ must be zero. Consequently
$\varphi_l$ is surjective (Lemma \ref{non-surjective}).

To find the obstruction for the surjectivity of $\varphi_l$, we
first show that the two maps $H_l(\partial N_1)
 \to H_l(P) \leftarrow H_l(\partial N_2)$ induced by the inclusions are
 isomorphisms (Lemma
\ref{twobundles}, which is based on Lemma \ref{exact} and Lemma
\ref{onebundle}).

The disc bundle defining attractor $S_j$ must have zero Euler class,
and with the help of the Gysin sequence, we see that $H_l(\partial
N_j)\cong H_l(X_j) \oplus H_{l-q_j+1}(X_j)$, and there is a subspace
$U$ of $H_l(\partial N_j)$ with $\dim U =\dim H_{l-q_j+1}(X_j)$
which is generated by $S^{q_j-1}$-bundles over cycles in $X_j$,
$j=1,2$ (Lemma~\ref{euler0}).

Note that both $N_1$ and $N_2$ are $K(\pi, 1)$ spaces by
Theorem~\ref{exp}. If $q_1>2$ or $q_2>2$, the image of above
mentioned subspace $U$ under $\varphi_l$ must be zero (Lemma
\ref{shrink}). Then simple dimension analysis indicates that
$\varphi_l$ cannot be surjective. Therefore $q_1=q_2=2$ (cf. Remark
\ref{pq3}) and the attractors must be of type $(n-2, 2)$. Finally
one concludes that $M$ is a rational homology sphere by detailed
homological argument.

\subsection{Definitions related to DE attractors}\label{DE}

\begin{definition}
Let $M$ be a closed manifold, and $f:M\to M$ be a map. An {\em
invariant set} of $f$ is a subset $\Lambda\subset M$ such that
$f(\Lambda)=\Lambda$. A  point $x\in M$ is {\em non-wandering} if
for any neighborhood $U$ of $x$, $f^n(U)\cap U\ne \emptyset $ for
infinitely many integers $n$. Then $\Omega(f)$, the {\em
non-wandering set} of $f$, defined as the set of all non-wandering
points,  is an $f$-invariant closed set. A set $\Lambda\subset M$ is
an {\em attractor} if there exists a closed neighborhood $U$ of
$\Lambda$ such that $f(U)\subset {\rm Int} U$,
$\Lambda=\bigcap_{n=1}^\infty f^n(U)$, and $\Lambda=\Omega(f|U)$.

Now assume $M$ is a Riemannian manifold. A closed invariant set
$\Lambda$ of $f$ is {\em hyperbolic} if there is a continuous
$f$-invariant splitting of the tangent bundle $TM|_\Lambda$ into
{\em stable} and {\em unstable bundles} $E^s_\Lambda\oplus
E^u_\Lambda$ with
\begin{alignat*}{3}
\|Df^m(v)\| &\leq C\lambda^{-m}\|v\|, \quad && \forall \ v\in
E^s_\Lambda,\ &&  m>0,
\\
\|Df^{-m}(v)\| &\leq C\lambda^{-m}\|v\|, \quad && \forall \ v\in
E^u_\Lambda,\ &&  m>0,
\end{alignat*}
for some fixed constants $C>0$ and $\lambda>1$.

A diffeomorphism $f$ is called {\it Anosov} if  $f$ is hyperbolic in
the whole manifold $M$. A map $f$ is called {\it expanding} if $f$
is Anosov and $\dim E^u_\Lambda = \dim M$, in other words, there are
constants $C>0$ and $\lambda>1$ such that
$$\|Df^m(v)\|\geq C\lambda^m\|v\|, \quad \forall v\in TM, \ m >0.$$

Although a metric on $M$ is necessary  to define hyperbolic, Anosov
and expanding maps, whether a map falls into these categories does
not depend on the choice of the metrics because all the norms on
Euclidean spaces are equivalent. It is also shown in [Ma] that for
any expanding map $f$, $M$ has a metric for which $C=1$.
\end{definition}

\begin{definition}
\label{pq}

Let $X$ be the $p$-dimensional compact manifold, $D^q$ be the
$q$-dimensional unit disk, and $N=X \tilde{\times} D^q$, a disk
bundle over $X$. Let $\pi: X \tilde\times D^q \to X$ be the
projection.

Suppose an embedding $e: X\tilde \times D^q \to X\tilde \times D^q$
satisfies the following two conditions:

(1) For some expanding map $\varphi: X \to X$, the diagram
$$\begin{CD}
 X \tilde\times D^q @>e>> X \tilde\times D^q \\
 @V\pi VV   @VV\pi V \\
X @>>\varphi > X \end{CD}$$commutes, i.e. $\varphi \circ \pi =\pi
\circ e$, or $e$ preserves the disk fibers and descends to the
expanding map $\varphi$ by $\pi$,

(2) $e$ shrinks the $D^q$ factor evenly by some constant $0<\lambda<
1$, i.e. the image of each fiber $D^q$ under $e$ is $q$-dimensional
disk with radius $\lambda$,

then we call $e$ a {\it hyperbolic bundle embedding} and $S =
\bigcap_{n\ge0}^{\infty} e^n(N)$ an {\it attractor of type $(p,q)$}
derived from expanding map $\varphi$.
\end{definition}

\begin{definition}\label{pq2}
Let $M$ be a $(p+q)$-manifold and $f: M\to M$ be a diffeomorphism.
If there is an embedding $N \cong X^p\tilde \times D^q \subset M$
such that $f|N$ (resp.\ $f^{-1}|N$) conjugates $e:N\to N$, where $e$
and $N$ are as given in Definition \ref{pq}, we call
$S=\bigcap_{h=1}^{\infty} f^{h}(N)$ (resp.\
$S=\bigcap_{h=1}^{\infty} f^{-h}(N)$) an (+) {\em attractor (resp.
$(-)$ attractor) of type $(p,q)$ derived from expanding maps} , and
we also say $N$ is a \textit{defining disk bundle} of $S$.

For simplicity, call both ($+$) attractor and  ($-$) attractor
derived from expanding maps as {\it DE attractors}. Call a DE
attractor is {\it oriented} if the base manifold $X$ is oriented.
Call a DE attractor is {\it toric} if $X$ is a torus.

\end{definition}

\begin{remark}\label{pq3} DE attractors of type $(p,q)$
is homeomorphic to  so called {\it $p$-dimensional solenoids} for
any $q$. DE attractors of type (1,2),  nested intersections of solid
tori $S^1 \times D^2$, is the traditional solenoids introduced by
Vietoris in 1927 [Vi].  The DE attractors given in Smale's paper
[Sm] are of type $(p, p+1)$, where the condition $q=p+1$ guarantees
any expanding map on base manifold can be lifted to an embedding of
the disk bundle in the construction. On the other hand, DE
attractors of type $(p,1)$ do not exist for any $p$: there is no
embedding $e: X \times D^1 \to X \times D^1$ lifted from an
expanding $\varphi: X\to X$; indeed no $p$-dimensional solenoids
embed into $(p+1)$-dimensional manifolds \cite{JWZ}.
\end{remark}

Recall that two maps $f: X\to X$ and $g: Y \to Y$ are called
\textit{topological conjugate} if there exists a homeomorphism $h: X
\to Y$ such that $h \circ f = g \circ h$.

Except noted otherwise, all homology and cohomology groups appeared
in this paper are over the field of real numbers $\RR$, i.e.
$H_*(X)$ means $H_*(X; \RR)$, $H^*(X)$ means $H^*(X; \RR)$. The
$i$-th Beti number, i.e.  $\dim H_i(X; \RR)$ is denoted by
$\beta_i(X)$.

\section{Expanding maps}

\subsection{Expanding linear maps}

\begin{definition} Let $V$ be a finite dimensional vector space over $\RR$,
$f:V \to V$ is a linear map, if all the eigenvalues of $f$ are
greater than $1$ in absolute value, then $f$ is called
\textit{expanding}. For a real square matrix $A$, if all of its
eigenvalues are greater than $1$ in absolute value, then $A$ is
called \textit{expanding}.
\end{definition}

\begin{lemma} \label{suj-exp} Let $V$ and $W$ be two finite dimensional real vector spaces,
$f$, $g$ and $h$ are linear maps that make the following diagram
commute.
$$\begin{CD}
 V @>f>> V \\
 @VhVV   @VVhV \\
W @>>g> W \end{CD}$$ If $f$ is expanding and $h$ is surjective, then
$g$ is expanding.
\end{lemma}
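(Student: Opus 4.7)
The plan is to reduce $g$ to an induced map on a quotient of $V$ and then invoke the standard fact that eigenvalues of a quotient map are a subset of eigenvalues of the original.

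First I would set $K := \ker h \subseteq V$. Using the commutativity $h \circ f = g \circ h$, one checks immediately that $h(f(K)) = g(h(K)) = 0$, so $f(K) \subseteq K$, i.e.\ $K$ is an $f$-invariant subspace. Hence $f$ descends to a well-defined linear map $\bar f : V/K \to V/K$. Since $h$ is surjective, the first isomorphism theorem gives a linear isomorphism $\bar h : V/K \xrightarrow{\sim} W$, and the commutativity of the original square, together with the definition of $\bar f$, makes the square
$$\begin{CD}
V/K @>\bar f>> V/K \\
@V\bar h V\cong V @V\cong V\bar h V \\
W @>>g> W
\end{CD}$$
commute. Thus $g$ is conjugate to $\bar f$, and it suffices to prove that $\bar f$ is expanding.

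For this I would use the standard linear-algebra fact that for an $f$-invariant subspace $K \subseteq V$, the characteristic polynomial factors as $\chi_f(t) = \chi_{f|_K}(t) \cdot \chi_{\bar f}(t)$ (pick a basis of $V$ extending a basis of $K$; the matrix of $f$ becomes block upper-triangular with $f|_K$ in the top-left block and a matrix representing $\bar f$ in the bottom-right block). Consequently every eigenvalue of $\bar f$ is an eigenvalue of $f$, hence has absolute value strictly greater than $1$. Therefore $\bar f$, and equivalently $g$, is expanding.

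There is no real obstacle here: the only thing to check carefully is that $\bar f$ is genuinely well-defined on $V/K$ (which uses $f(K) \subseteq K$) and that the conjugating map $\bar h$ is indeed an isomorphism (which uses surjectivity of $h$, giving $V/\ker h \cong W$). Both are automatic from the hypotheses, so the lemma follows.
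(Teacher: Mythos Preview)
Your proof is correct and follows essentially the same approach as the paper's: both show that $\ker h$ is $f$-invariant, obtain a block-triangular matrix for $f$ with respect to a basis adapted to $\ker h$, and conclude that the eigenvalues of the induced map on the quotient (equivalently, of $g$) are among those of $f$. The paper carries this out in explicit coordinates rather than in the language of quotient spaces, but the underlying argument is identical.
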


\begin{proof} Suppose $\dim V=n$, $\dim W= m$. Let $w_1, \cdots, w_m$ be a basis for $W$, by the
surjectivity of $h$, choose $v_1, \cdots, v_m \in V$ such that
$h(v_i)= w_i$, $ 1\leq i \leq m$. Clearly $v_1, \cdots, v_m$ are
linearly independent, we may then find more vectors $v_{m+1},
\cdots, v_n$ in $\ker h$ so that $v_1 , \cdots, v_n$ is a basis for
$V$. Suppose $f(v_i) =\sum_{j=1}^{n} c_{ij} v_j$.  Then for $m+1
\leq i \leq n$, we have
$$0 = g(0)= g(h(v_i))= h(f(v_i)) = h(\sum_{j=1}^{n} c_{ij} v_j)=
\sum_{j=1}^{m} c_{ij} w_j.$$ Thus $c_{ij}=0$ for $m+1 \leq i \leq n$
and $1 \leq j \leq m$. Under the above two bases, the matrix
representation of $f$ is $A= \begin{bmatrix} A_{11} & 0 \\
A_{21} & A_{22}
\end{bmatrix}$, where $A_{11}$ and $A_{22}$ are $m \times m$ and $(n-m)
\times (n-m)$ matrices respectively. This matrix $A$ is expanding,
so all of its eigenvalues are greater than 1 in absolute value. Note
that all the eigenvalues of $A_{11}$ are eigenvalues of $A$. Hence
$A_{11}$ is expanding. It follows from $h(v_i)= w_i$ that linear map
$g$ corresponds to matrix $A_{11}$. Therefore $g$ is an expanding
map.
\end{proof}

\begin{lemma} \label{dual-exp} Let $f: V \to V$ be a linear map on the finite dimensional real vector
space $V$. Denote the dual map of $f$ by $f^*: V^* \to V^*$, where
$V^*={\rm Hom}(V, \RR)$.

If $f$ is expanding, so is $f^*$ and vice versa.\end{lemma}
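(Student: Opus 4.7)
The plan is to reduce the statement to the standard fact that a matrix and its transpose share the same characteristic polynomial, hence the same eigenvalues (with multiplicities).

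First I would pick any basis $\{v_1,\dots,v_n\}$ of $V$ and let $\{v_1^*,\dots,v_n^*\}$ be the corresponding dual basis of $V^*$. Writing $f$ in the chosen basis as a matrix $A=(a_{ij})$, a short computation shows that the matrix of the dual map $f^*$ in the dual basis is the transpose $A^T$. Indeed, $(f^*v_i^*)(v_j)=v_i^*(f(v_j))=v_i^*\bigl(\sum_k a_{kj}v_k\bigr)=a_{ij}$, so $f^*v_i^*=\sum_j a_{ij}v_j^*$, which means $f^*$ is represented by $A^T$.

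Next I would invoke the identity of characteristic polynomials
\[
\det(\lambda I-A^T)=\det\bigl((\lambda I-A)^T\bigr)=\det(\lambda I-A),
\]
so $A$ and $A^T$ have the same eigenvalues (counted with algebraic multiplicity). Consequently every eigenvalue of $f^*$ is an eigenvalue of $f$ and vice versa. By the definition of an expanding linear map — all eigenvalues exceed $1$ in absolute value — this immediately gives the two directions of the equivalence: $f$ is expanding if and only if $f^*$ is expanding.

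I do not expect any real obstacle; the only thing to be careful about is making sure the claim about $f^*$ being represented by $A^T$ (rather than $A$) is written out, since the proof of the lemma is essentially nothing more than this observation combined with $\det(\lambda I-A)=\det(\lambda I-A^T)$.
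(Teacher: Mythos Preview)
Your proof is correct and follows exactly the same approach as the paper: represent $f$ and $f^*$ by $A$ and $A^T$ with respect to a basis and its dual, then use that $A$ and $A^T$ share the same characteristic polynomial and hence the same eigenvalues. The paper's version is simply terser, omitting the explicit verification that the dual map is given by the transpose.
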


\begin{proof} Fix a basis for $V$ and a dual basis for $V^*$, then
$f$ and $f^*$ are represented by two square matrices $A$ and $A^T$
respectively. The claim then follows from the fact that $A$ and
$A^T$ have the same characteristic polynomials, and thus the same
eigenvalues.
\end{proof}

\begin{lemma} \label{ext-exp} Let $f: V \to V$ be an expanding linear
map on the real vector space $V$, for any positive integer $i$,

(a) the induced map $\otimes^i f : \otimes^i V \to \otimes^i V$ is
expanding,

(b) the induced map $\wedge^i f: \wedge^i V \to \wedge^i V$ is
expanding.
\end{lemma}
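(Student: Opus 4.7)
The plan is to reduce both (a) and (b) to the classical eigenvalue formulas for tensor and exterior powers of a linear operator. Specifically, if the eigenvalues of $f$ (over $\CC$, listed with multiplicity) are $\lambda_1,\ldots,\lambda_n$, I claim the eigenvalues of $\otimes^i f$ are exactly the products $\lambda_{j_1}\cdots\lambda_{j_i}$ indexed by all multi-indices $(j_1,\ldots,j_i)\in\{1,\ldots,n\}^i$, and those of $\wedge^i f$ are the products indexed by strictly increasing multi-indices $j_1<\cdots<j_i$. Once this is established, both parts are immediate: since $|\lambda_s|>1$ for every $s$, every such product has absolute value $|\lambda_{j_1}\cdots\lambda_{j_i}|>1$.

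To prove the eigenvalue formula, I pass to the complexification $f_\CC\colon V\otimes_\RR\CC\to V\otimes_\RR\CC$. A real operator and its complexification share the same characteristic polynomial, and $\otimes^i(f_\CC)$ (resp.\ $\wedge^i(f_\CC)$) is the complexification of $\otimes^i f$ (resp.\ $\wedge^i f$), so it suffices to compute eigenvalues of the complex operators. Choose a basis $e_1,\ldots,e_n$ of $V\otimes_\RR\CC$ in which $f_\CC$ is upper triangular with diagonal entries $\lambda_1,\ldots,\lambda_n$, and order the induced bases of $\otimes^i(V\otimes_\RR\CC)$ and $\wedge^i(V\otimes_\RR\CC)$ lexicographically by multi-index. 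Expanding
\[
(\otimes^i f_\CC)(e_{j_1}\otimes\cdots\otimes e_{j_i})=f_\CC(e_{j_1})\otimes\cdots\otimes f_\CC(e_{j_i})
\]
by multilinearity yields only terms $e_{k_1}\otimes\cdots\otimes e_{k_i}$ with $k_s\le j_s$ for each $s$; such tuples are lex $\le(j_1,\ldots,j_i)$, so the matrix is upper triangular with diagonal entry $\lambda_{j_1}\cdots\lambda_{j_i}$. The argument for $\wedge^i f_\CC$ is parallel, once wedges with repeated indices are discarded and the remaining factors are sorted.

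The main obstacle is the bookkeeping for the wedge case: after expanding and sorting, one must verify that the resulting strictly increasing tuple $(k_1',\ldots,k_i')$ is still lex $\le(j_1,\ldots,j_i)$. This reduces to the elementary observation that if $k_s\le j_s$ for all $s$ and $(j_1,\ldots,j_i)$ is already sorted, then for each $s$ there are at least $s$ indices $t$ with $k_t\le j_s$ (namely $t=1,\ldots,s$), hence the $s$-th order statistic of $(k_1,\ldots,k_i)$ is $\le j_s$. With this in hand the exterior case reduces to the tensor case, and the lemma follows.
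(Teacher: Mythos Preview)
Your proof is correct. For part (a) you essentially reprove the ``standard fact'' about eigenvalues of tensor products that the paper simply quotes, so the approaches coincide there. For part (b), however, you take a genuinely different route: you compute the eigenvalues of $\wedge^i f$ directly by showing $\wedge^i f_\CC$ is upper triangular in the lex-ordered wedge basis, and your order-statistics argument (that $k_s\le j_s$ for all $s$ with $(j_s)$ sorted forces the sorted $(k_s')$ to satisfy $k_s'\le j_s$) is the right way to handle the bookkeeping. The paper instead observes that the natural projection $\otimes^i V\twoheadrightarrow\wedge^i V$ intertwines $\otimes^i f$ with $\wedge^i f$, and then invokes Lemma~\ref{suj-exp} (an expanding map pushed through a surjection remains expanding) to deduce (b) from (a) in one line. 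Your approach yields the exact eigenvalues of $\wedge^i f$ as a byproduct, while the paper's is shorter and illustrates a reusable principle already established in the text.
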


\begin{proof} (a) There is a standard fact in multilinear algebra.

Let $f: V \to V$ and $g: W \to W$ be linear maps with $\dim V=n$ and
$\dim W=m$. Suppose $\lambda_1, \cdots, \lambda_n$ are the
eigenvalues of $f$ and $\mu_1, \cdots, \mu_m$ are the eigenvalues of
$g$, then the eigenvalues of $f\otimes g: V \otimes W \to V \otimes
W$ are $\lambda_i\mu_j$, where $i=1,\cdots, n$, and $j=1, \cdots,
m$.

Thus each eigenvalue of $\otimes^i f$ is a product of $i$ complex
numbers with absolute value greater than $1$, which is still greater
than $1$ in absolute value. Therefore, by definition, $\otimes^i$ is
expanding.

(b) Note that there is a surjective map from $\otimes^i V$ to
$\wedge^i V$, then it follows from Lemma~\ref{suj-exp} that
$\wedge^i f$ is also expanding.
\end{proof}

\begin{lemma}\label{chain-exp} Let $(C_*,d)$ be a cochain complex,
where each $C_i$ is a finite dimensional real vector space, and $f:
(C_*, d) \to (C_*,d )$ be a chain map. Suppose $f_i: C_i \to C_i$ is
expanding for all positive integer $i$, then $f$ induces expanding
maps on the cohomology groups $H^i(C)$ for $i>0$.
\end{lemma}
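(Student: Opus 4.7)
My plan is to identify $H^i(C)$ with the subquotient $Z^i/B^i$, where $Z^i=\ker(d\colon C_i\to C_{i+1})$ and $B^i=\mathrm{im}(d\colon C_{i-1}\to C_i)$, and then deduce the expansion of the induced map from Lemma~\ref{suj-exp}, after first checking that the restriction of $f_i$ to $Z^i$ is still expanding.

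The auxiliary fact I would establish first is that if $g\colon V\to V$ is an expanding linear map on a finite dimensional real vector space and $W\subseteq V$ is a $g$-invariant subspace, then $g|_W$ is expanding. To see this, extend a basis of $W$ to a basis of $V$; with respect to such a basis $g$ is represented by a block upper-triangular matrix
\[
\begin{bmatrix} A & * \\ 0 & B \end{bmatrix},
\]
where $A$ is the matrix of $g|_W$. Since the characteristic polynomial of $g$ factors as the product of the characteristic polynomials of $A$ and $B$, the eigenvalues of $A$ form a subset of the eigenvalues of $g$ and therefore all have absolute value greater than $1$.

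Since $f$ is a chain map, both $Z^i$ and $B^i$ are $f_i$-invariant: $d\circ f_i=f_{i+1}\circ d$ implies $f_i(Z^i)\subseteq Z^i$, and similarly $f_i(B^i)=f_i(d(C_{i-1}))=d(f_{i-1}(C_{i-1}))\subseteq B^i$. Applying the auxiliary fact to $g=f_i$ and $W=Z^i$ gives that $f_i|_{Z^i}\colon Z^i\to Z^i$ is expanding. Because $f_i|_{Z^i}$ carries $B^i$ into itself, it descends through the quotient map $q\colon Z^i\to Z^i/B^i=H^i(C)$ to the induced map $\bar f\colon H^i(C)\to H^i(C)$, and the resulting commutative square meets the hypotheses of Lemma~\ref{suj-exp} with $V=Z^i$, $W=H^i(C)$ and $h=q$ (which is surjective). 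The lemma then yields that $\bar f$ is expanding for every $i>0$.

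The only step that demands any real thought is the inheritance of expansion under restriction to an invariant subspace, and that is immediate from the block matrix form; everything else is a direct packaging using the standard identification of cohomology as a subquotient together with Lemma~\ref{suj-exp}.
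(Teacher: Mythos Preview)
Your proof is correct and follows essentially the same approach as the paper's own proof: both first argue via a block-triangular matrix that the restriction $f_i|_{Z^i}$ inherits the expanding property, and then invoke Lemma~\ref{suj-exp} along the surjection $Z^i\to H^i(C)$. The only cosmetic difference is that you separately record the invariance of $B^i$, which the paper leaves implicit in the well-definedness of the induced map.
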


\begin{proof} We label the boundary maps in the cochain complex as
$d_i: C_i \to C_{i+1}$, then $$H^i(C)=\frac{{\rm Ker} \ d_i}{{\rm
Im} \ d_{i-1}}.$$ As $f$ is a chain map,  ${\rm Ker} \ d_i$ is a
$f_i$-invariant subspace of $C_i$.

Fix a positive integer $i$, choose a basis for ${\rm Ker} \ d_i$ and
then extend it to a basis for $C_i$. Clearly, under this basis, the
matrix representation of $f_i$ takes the form $A=\begin{bmatrix}
A_{11} & A_{12}
\\0 & A_{22}
\end{bmatrix}$. Since $f$ is expanding, all the eigenvalues of $A$
are greater than $1$ in absolute value, so are eigenvalues of
$A_{11}$. Therefore, the restriction of $f_i$ on ${\rm Ker} \ d_i$
is expanding. Now we have a commutative diagram
$$\begin{CD}
{\rm Ker} \ d_i @>f_i>> {\rm Ker} \ d_i \\
 @VVV   @VVV \\
H^i(C) @>f^{*}_i>> H^i(C)
\end{CD}$$
Clearly, the two vertical maps are surjective, then according to
Lemma~\ref{suj-exp}, $f^{*}_i$ on $H^i(C)$ is expanding as well.
\end{proof}

\subsection{Gromov's theorem and Nomizu's theorem}

The most basic manifolds that admit expanding maps are $n$-tori.
Expanding maps are systematically discussed in late 1960's. We quote
some major properties as follows.

\begin{theorem}
\label{exp} Let $M$ be a closed $n$-manifold and $f: M\to M$ be an
expanding map. Then

(a) The universal cover of $M$ is  $\RR^n$ and $f$ is a covering map
(Shub [Sh1]).

(b) Any flat manifold admits expanding maps (Epstein-Shub [ES]).

(c) If $g: M\to M$ is an expanding map which is homotopic to $f$,
then $f$ and $g$ are topologically conjugate (Shub [Sh2]).

(d) $\pi_1(M)$ has polynomial growth (Franks [Sh2]).
\end{theorem}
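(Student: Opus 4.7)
Since Theorem \ref{exp} is a compilation of four classical results attributed to Shub, Epstein--Shub and Franks, the proof in this paper will essentially consist of pointers to the cited references. My plan for reconstructing the main ideas runs as follows.

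For part (a), I would lift $f$ to a map $\tilde f$ on the universal cover $\widetilde M$. Because $f$ is a local diffeomorphism carrying a uniform expanding estimate and $M$ is compact, $\tilde f$ is a surjective self-covering of $\widetilde M$. Iterating $\tilde f^{-1}$ starting from a chart near a chosen point produces a nested sequence of chart domains that are diffeomorphic to balls of radii $\lambda^k R$; their union exhausts $\widetilde M$, identifying $\widetilde M$ with $\RR^n$. Once $\widetilde M\cong\RR^n$, $f$ itself is a covering as the quotient of $\tilde f$ by the deck action of $\pi_1(M)$. For part (b), Bieberbach's structure theorem presents any closed flat $n$-manifold $M$ as a quotient $T^n/F$ by a finite group $F$ of affine isometries. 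Multiplication by any $k\ge 2$ on $T^n$ commutes with the linear parts of the elements of $F$; a standard adjustment of translation components produces an $F$-equivariant expanding map that descends to $M$.

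For part (c), I would lift the homotopy between $f$ and $g$ to a $\pi_1(M)$-equivariant homotopy between $\tilde f$ and $\tilde g$ whose tracks have bounded length. The limit $h(x)=\lim_{k\to\infty}\tilde g^{-k}\tilde f^{k}(x)$ converges uniformly, because the bounded discrepancy between $\tilde f$ and $\tilde g$ is swallowed by the contraction $\tilde g^{-k}$; a symmetric argument produces an inverse, and equivariance lets $h$ descend to a topological conjugacy on $M$. Part (d) is then a consequence of part (a): $\pi_1(M)$ acts freely and cocompactly on $\RR^n$, and the endomorphism $\gamma\mapsto \tilde f\gamma\tilde f^{-1}$ contracts the word metric by a definite factor, so the number of elements in a word ball of radius $R$ grows polynomially by the standard Franks argument (counting preimages under the contracting endomorphism).

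The main obstacle is part (a): identifying $\widetilde M$ with $\RR^n$ requires combining the geometric expanding estimate with a global topological identification of the universal cover, rather than just a local one. Once this structural result is in place, parts (b)--(d) follow by arguments that are essentially formal, and the upshot I actually need downstream is the consequence that every defining disk bundle $N_j\cong X_j\tilde\times D^{q_j}$ is a $K(\pi,1)$, which is immediate from (a) together with the disk-bundle structure.
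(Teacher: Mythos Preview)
You are correct that the paper gives no proof of Theorem~\ref{exp}: it is introduced with ``We quote some major properties as follows'' and each part is simply attributed to the cited references [Sh1], [ES], [Sh2]. Your sketches of (a)--(d) therefore go well beyond what the paper supplies, and the downstream use you flag---that each $N_j\simeq X_j$ is a $K(\pi,1)$, used in Case~1 of the proof of Theorem~\ref{main1} via Lemma~\ref{shrink}---is exactly how the paper invokes part (a).

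One small slip in your sketch of (a): you write that iterating $\tilde f^{-1}$ produces chart domains of radii $\lambda^k R$ that exhaust $\widetilde M$, but $\tilde f^{-1}$ is the contraction; it is the forward iterates $\tilde f^{k}(B)$ of a small ball (using that the lift $\tilde f$ is a diffeomorphism of the simply connected cover) that grow and exhaust $\widetilde M$. This does not affect the overall outline.
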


In [Gr], Gromov proved that if a finitely generated group $G$ has
polynomial growth, then $G$ contains a nilpotent subgroup of finite
index. Combining this result with Theorem~\ref{exp} (c) and (d), he
obtained

\begin{theorem} \label{class} An expanding map on a closed manifold is topologically
conjugate to an infra-nil-endomorphisms.
\end{theorem}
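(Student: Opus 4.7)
\medskip

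\noindent\textbf{Proof proposal.} The plan is to assemble Gromov's polynomial growth theorem with the three parts of Theorem~\ref{exp} in four stages. First, apply Theorem~\ref{exp}(d) to conclude that $\pi_1(M)$ has polynomial growth, then invoke Gromov's theorem to produce a nilpotent subgroup $H\le\pi_1(M)$ of finite index. Since by Theorem~\ref{exp}(a) the universal cover of $M$ is $\RR^n$, the group $\pi_1(M)$ is torsion-free and acts freely cocompactly on $\RR^n$; in particular $M$ is aspherical, and the finite cover $\widetilde M\to M$ corresponding to $H$ is an aspherical closed manifold with torsion-free, finitely generated nilpotent fundamental group.

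Second, I would identify $\widetilde M$ with a nilmanifold. Malcev's theorem provides a unique simply connected nilpotent Lie group $N$ containing $H$ as a cocompact lattice; the quotient $H\backslash N$ is aspherical with the same fundamental group as $\widetilde M$, so by the rigidity of nilmanifolds (Malcev / Mostow) the two are homeomorphic. The deck action of the finite group $\pi_1(M)/H$ extends to an action of affine automorphisms of $N$ commuting with the lattice, so $M$ itself is realized as an infra-nilmanifold $\Gamma\backslash N$.

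Third, I would construct an expanding infra-nil-endomorphism in the homotopy class of $f$. Lift $f$ to a homeomorphism $\widetilde f:\RR^n\to\RR^n$ inducing an injective endomorphism $f_\#$ of $\pi_1(M)$; after passing to a characteristic finite-index subgroup of $H$, Malcev rigidity extends $f_\#$ uniquely to a Lie-group endomorphism $\alpha:N\to N$ that descends to an infra-nil-endomorphism $\bar\alpha:\Gamma\backslash N\to \Gamma\backslash N$ homotopic to $f$. The expanding hypothesis on $f$ forces all eigenvalues of the Lie-algebra differential $d\alpha$ to have absolute value greater than $1$, so $\bar\alpha$ is itself expanding. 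Finally, applying Theorem~\ref{exp}(c) to the two homotopic expanding maps $f$ and $\bar\alpha$ yields a topological conjugacy, completing the argument.

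The principal obstacle is the middle stage: upgrading the purely algebraic fact that $\pi_1(M)$ is virtually nilpotent to the geometric statement that $M$ is homeomorphic to an infra-nilmanifold and that $f$ can be modeled topologically by an infra-nil-endomorphism. Both steps rely on Malcev's rigidity theory for lattices in nilpotent Lie groups, which one must invoke carefully to obtain a Lie-group endomorphism $\alpha$ (and not merely a homomorphism between abstract lattices) whose eigenvalue spectrum transports the expanding property of $f$ to $\bar\alpha$.
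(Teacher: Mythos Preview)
The paper does not actually prove Theorem~\ref{class}; it merely states it as a known result of Gromov~\cite{Gr}, indicating in one sentence that it follows by combining Gromov's polynomial growth theorem with parts (c) and (d) of Theorem~\ref{exp}. Your proposal is precisely a fleshing-out of that one-sentence indication: you invoke Theorem~\ref{exp}(d) and Gromov's theorem to get virtual nilpotence, then use Malcev rigidity to realize $M$ as an infra-nilmanifold and to model $f_\#$ by a Lie-group endomorphism $\alpha$, and finally apply Theorem~\ref{exp}(c) to conjugate $f$ to $\bar\alpha$. This is the standard route and matches what the paper is citing, so your approach is correct and essentially the same.

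The only remark worth making is that you have correctly located where the real work lies. The paper's citation names only Theorem~\ref{exp}(c),(d) and Gromov's growth theorem, but as you observe, bridging from ``$\pi_1(M)$ is virtually nilpotent'' to ``$M$ is an infra-nilmanifold and $f$ is homotopic to an expanding infra-nil-endomorphism'' requires Malcev's completion and rigidity theorems (and a bit of care with Bieberbach/Auslander-type results for the infra structure). The paper suppresses this entirely; your sketch makes it explicit, which is an improvement in clarity rather than a difference in strategy.
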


Recall that a \textit{nilpotent} Lie group $G$ is a connected Lie
group whose Lie algebra $\g$ is a \textit{nilpotent} Lie algebra.
That is, its Lie algebra lower central series
$$\g_1=[\g,\g], \ \g_2=[\g, \g_1], \cdots, \g_{k+1}=[\g, \g_{k}],\
\cdots$$ eventually vanishes.

Suppose $G$ be a simply connected nilpotent Lie group. Denote by
${\rm Aff}(G)$ the group of transformations of $G$ generated by all
the left translations and all the automorphisms on $G$. In other
words, ${\rm Aff}(G)= G \ltimes {\rm Aut}(G)$.

For a discrete subgroup $\Gamma$ of $G$,  call $G/\Gamma$ a
\textit{nilmanifold} when it is compact. For a group $\Gamma \subset
{\rm Aff}(G)$ which acts freely and discretely on $G$, call
$N=G/\Gamma$ \textit{infra-nil-manifold} when it is compact. Every
automorphism of $G$ which respects the group action $\Gamma$ induces
a map on $G/\Gamma$. Such maps are called
\textit{infra-nil-endomorphism}.

\begin{theorem}\label{Nomizu} For each
nilmanifold $N=G/\Gamma$, the De Rham cohomology of $N$ is
isomorphic to the cohomology of the Chevalley-Eilenberg complex
$(\wedge \g^*, \delta)$ associated with the Lie algebra $\g$ of $G$,
that is
$$H^*(N;\RR) \cong H^* (\wedge \g^*, \delta).$$ \end{theorem}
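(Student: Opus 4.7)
The plan is to realize $\wedge\g^*$ as the complex of left-invariant differential forms on $G$. Under the identification $T_eG \cong \g$, a left-invariant $p$-form on $G$ is determined by its value at the identity, giving a canonical isomorphism (left-invariant $p$-forms)$\cong \wedge^p\g^*$. A short calculation using the Maurer-Cartan equation shows that the exterior derivative restricted to left-invariant forms is exactly the Chevalley-Eilenberg differential
$$\delta\omega(X_0,\dots,X_p)=\sum_{i<j}(-1)^{i+j}\omega([X_i,X_j],X_0,\dots,\hat X_i,\dots,\hat X_j,\dots,X_p).$$
Since $\Gamma$ acts on $G$ by left translations, left-invariant forms descend unambiguously to forms on $N=G/\Gamma$, yielding an inclusion of cochain complexes $i\colon(\wedge\g^*,\delta)\hookrightarrow(\Omega^*(N),d)$. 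The content of the theorem is that $i$ is a quasi-isomorphism.

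I would prove this by induction on the nilpotency class $k$ of $\g$. The base case (when $\g$ is abelian) reduces to $N\cong T^n$, for which the classical computation $H^*(T^n;\RR)\cong\wedge^*(\RR^n)^*$ is visibly realized by left-invariant forms (equivalently, by the harmonic forms for the flat metric). For the inductive step, let $\mathfrak{z}\subset\g$ be the center, which is nontrivial because $\g$ is nilpotent, and let $Z=\exp\mathfrak{z}\subset G$ be the corresponding central subgroup. Then $\Gamma_Z=\Gamma\cap Z$ is a uniform lattice in $Z$, and $\Gamma'=\Gamma/\Gamma_Z$ is a uniform lattice in the nilpotent Lie group $G'=G/Z$ of strictly smaller nilpotency class, so one obtains a principal torus bundle
$$T^r\longrightarrow N=G/\Gamma\longrightarrow N'=G'/\Gamma',\qquad r=\dim\mathfrak{z}.$$

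Now compare two spectral sequences. The Leray-Serre spectral sequence of the bundle has $E_2^{p,q}=H^p(N';H^q(T^r;\RR))$; the local system is trivial since the monodromy acts on the fiber by left translations of $Z$, which act trivially on $H^*(T^r)$. On the algebraic side, the central extension $0\to\mathfrak{z}\to\g\to\g'\to 0$ gives the Hochschild-Serre spectral sequence with $E_2^{p,q}=H^p(\g';H^q(\mathfrak{z}))$, again with trivial $\g'$-action since the extension is central. Filter $\Omega^*(N)$ by horizontal degree along the fibration and $\wedge\g^*$ by the degree of the $(\g')^*$-component; the inclusion $i$ respects these filtrations, so it induces a morphism between the two spectral sequences. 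By the inductive hypothesis applied to $N'$ and the base case applied to the torus fiber, this morphism is an isomorphism on $E_2$-pages, and convergence of both spectral sequences gives the desired isomorphism $H^*(N;\RR)\cong H^*(\wedge\g^*,\delta)$.

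The main obstacle is making the comparison of spectral sequences genuinely rigorous: one must identify the edge and transgression differentials on both sides and verify they agree. In particular, the transgression $d_2\colon H^1(T^r)\to H^2(N')$ of the torus bundle should be shown to correspond, under $i$, to the class in $H^2(\g';\mathfrak{z})$ that classifies the central extension $0\to\mathfrak{z}\to\g\to\g'\to 0$. This amounts to a naturality statement for the Chern class of the principal torus bundle in terms of the Lie-algebraic cocycle, which is the technical heart of the argument; once it is established, the five-lemma (or a straightforward induction on $E_r$-pages) completes the proof.
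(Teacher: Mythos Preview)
The paper does not actually prove this theorem; it sets up the framework---identifying the Chevalley--Eilenberg complex with the complex $C(G)$ of $G$-right-invariant forms, and identifying $H^*(N;\RR)$ with the cohomology of the complex $C(N)$ of $\Gamma$-invariant forms---and then simply cites Nomizu's original paper [No] for the key assertion that the inclusion $C(G)\hookrightarrow C(N)$ is a quasi-isomorphism. Your proposal goes further than the paper and sketches an actual proof of that key step via a comparison of the Leray--Serre and Hochschild--Serre spectral sequences along the central torus fibration. This is a legitimate and well-known route to Nomizu's theorem, and the outline is essentially correct.

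Two small points of care. First, a convention issue: with $N=G/\Gamma$ as in the paper, $\Gamma$ acts on $G$ on the right, so it is \emph{right}-invariant forms that descend to $N$; your ``$\Gamma$ acts by left translations'' is the $\Gamma\backslash G$ convention. This is harmless (inversion intertwines the two), but you should say so. Second, the claim that $\Gamma_Z=\Gamma\cap Z$ is a uniform lattice in $Z$ and that $\Gamma/\Gamma_Z$ is a uniform lattice in $G/Z$ is not automatic from the definition of a lattice; it is a consequence of Mal'cev's structure theory for lattices in simply connected nilpotent Lie groups, and should be cited. With those two points in hand, your $E_2$-comparison (trivial coefficients on both sides because the extension is central) plus the Zeeman comparison theorem finishes the argument; the explicit identification of the transgression with the extension cocycle, while illuminating, is not strictly needed once you know the map of spectral sequences is an isomorphism on $E_2$.
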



In [CE], Chevalley-Eilenberg complex $(\wedge \g^*, \delta)$  was
constructed for Lie algebra $\g$ as follows. Let $\{X_1, \cdots,
X_s\}$ be the basis for $\g$, and $\{ x_1, \cdots, x_s \}$ be the
dual basis for $\g^*$, the dual of $\g$. On the exterior algebra
$\wedge \g^*$, construct the differential $\delta$ by defining it on
degree 1 elements as
$$\delta x_k (X_i, X_j)= - x_k ([X_i, X_j]),$$
and extending to $\wedge \g^*$ as a graded bilinear derivation.
Suppose $[X_i, X_j]=\sum_l c_{ij}^{l}X_{l}$, where $c_{ij}^{l}$ are
the structure constants of $\g$, then $\delta x_k(X_i,
X_j)=-c_{ij}^{k}$, and thus the differential $\delta$ on generators
has the form
$$\delta x_k = - \sum_{i<j} c_{ij}^{k} x_i \wedge x_j.$$
Now it is clear that the Jacobi identity in the Lie algebra is
equivalent to the condition $\delta^2=0$. Hence $(\wedge \g^*,
\delta)$ is a cochain complex.

For Lie group $G$, we consider the cochain complex consisting of all
the $G$-right invariant differential forms on $G$ over $\RR$ with
ordinary differential $d$, denote it by $(C(G), d)$. From the
definition of Lie algebra associated with a Lie group, we know
$$\label{iso1} (\wedge \g^*,\delta) \cong (C(G), d).
$$

For $N=G/\Gamma$, Let $C(N)$ be the cochain complex consisting of
all the $\Gamma$-right invariant differential forms on $G$ over
$\RR$, denote it by $(C(N),d)$. It is clear that $H^*(C(N))$ is the
De Rham cohomology of the nilmanifold $N$, that is $$\label{iso2}
H^*(N;\RR) \cong H^*(C(N))
$$

Under the condition that $G$ is nilpotent, Nomizu [No] showed that
the natural inclusion $C(G) \to C(N)$ induces an isomorphism on the
cohomology level $$\label{iso3} H^*(C(N)) \cong H^*(C(G))
$$

Theorem~\ref{Nomizu} then follows from the above three isomorphisms.

\subsection{Expanding maps are expanding on (co)homology groups}

Now we are ready to prove

\noindent \textbf{Theorem \ref{main-exp}} \  \textit{Let $f: X \to
X$ be an expanding map on the closed oriented manifold $X$, then the
induced homomorphisms $f_*:H_l(X,\RR)\to H_l(X,\RR)$ and
$f^*:H^l(X,\RR)\to H^l(X,\RR)$ are both expanding for any positive
integer $l$. }

Before the proof, we need a lemma about the induced homomorphisms on
homology groups of covering and expanding maps.

\begin{lemma}\label{expandhomo}
(a) If $f: X\to Y$ is a covering map between the oriented closed
manifolds, then the induced map $f_*: H_*(X)\to H_*(Y)$ is an
epimorphism.

(b) If $f$ is an expanding map on the oriented closed manifold $X$,
then the induced map $f_*: H_*(X)\to H_*(X)$ is an isomorphism.
\end{lemma}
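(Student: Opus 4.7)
The plan for (a) is to invoke the transfer (or ``Umkehr'') homomorphism for a finite covering. First I would observe that since $X$ and $Y$ are both compact, any covering map $f\colon X\to Y$ must have finite degree $d$. I would then recall that for such a finite cover there is a chain-level transfer $\tau_{\#}\colon C_*(Y)\to C_*(X)$ that sends a singular simplex $\sigma$ to the sum of its $d$ lifts to $X$; this is a chain map, and by construction $f_\# \circ \tau_\#=d\cdot\mathrm{id}$. Passing to homology yields $f_*\circ\tau_*=d\cdot\mathrm{id}$ on $H_*(Y;\RR)$. Since we are working with real coefficients and $d\ge 1$, multiplication by $d$ is an isomorphism, hence $f_*$ is surjective.

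The plan for (b) is to reduce immediately to (a) by invoking Theorem~\ref{exp}(a), which states that every expanding map $f\colon X\to X$ on a closed manifold is a covering map (its universal cover being $\RR^n$). Thus $f\colon X\to X$ is a self-covering of finite degree, and by part (a) the induced map $f_*\colon H_l(X;\RR)\to H_l(X;\RR)$ is surjective for every $l$. Since $X$ is a closed manifold each $H_l(X;\RR)$ is finite-dimensional, and a surjective linear endomorphism of a finite-dimensional vector space is automatically injective. Hence $f_*$ is an isomorphism.

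The only step that requires a little care is setting up the transfer, since one must verify that $\tau_\#$ is a well-defined chain map (that lifts of faces equal faces of lifts, modulo a consistent labeling of sheets) and that the composition $f_\#\tau_\#$ really is multiplication by $d$. These are standard facts for finite covers, so I would simply cite them rather than reproducing the construction. No substantive obstacle arises: part (a) is a one-line transfer argument and part (b) is a one-line application of (a) combined with finite-dimensionality, given Gromov--Shub's Theorem~\ref{exp}(a).
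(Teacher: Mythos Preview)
Your proposal is correct and follows essentially the same route as the paper. For (a) the paper observes that a covering between closed oriented manifolds has non-zero degree and then cites the standard fact that non-zero degree maps induce surjections on real homology; your transfer argument $f_*\tau_*=d\cdot\mathrm{id}$ is precisely the usual proof of that fact in the covering case, so the two arguments coincide in substance. Part (b) is identical in both: expanding $\Rightarrow$ covering (via Theorem~\ref{exp}(a)), apply (a), and conclude by finite-dimensionality.
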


\begin{proof} (a) $f: X\to Y$ is a covering between the oriented closed
manifolds implies that $f$ is a map of non-zero degree. Then it is
well-known that $f_*$ is surjective (cf. Lemma 7 in [WZ]).

(b) $f: X\to X$ is expanding implies that $f$ is a covering. $X$ is
closed and orientable implies that $f$ is a map of non-zero degree.
Hence  $f_*$ is surjective on $H_*(X) $ by (a). A self-surjection on
any finite dimensional real vector space must be an isomorphism.
\end{proof}

\medskip

\noindent \textbf{Proof of Theorem \ref{main-exp}.} As topologically
conjugate maps induce conjugate homomorphisms on (co)homology
groups, by Theorem~\ref{class}, we may assume that $f$ is an
infra-nil-endomorphism.


Suppose $f:X\to X$ is an expanding infra-nil-endomorphism, then
according to the definition, $X= G / \Gamma$ for some simply
connected nilpotent Lie group $G$, $\Gamma \subset {\rm Aff}(G)$ is
a discrete uniform subgroup, and $f$ can be lifted to an
automorphism $A$ of $G$ which respects $\Gamma$. $A$ induces a
linear map $a: \g \to \g$ in the Lie algebra $\g$ of G. $f$ is
expanding implies $A$ is expanding and consequently $a$ is an
expanding linear map. Let $\Gamma'= G \cap \Gamma$, then
$\Gamma/\Gamma'$ is a finite group and  $N=G/\Gamma'$ is a
nilmanifold (see  \cite{AA} Theorem 1). Clearly $N$ is a covering of
$X$ and $f$ can be lifted to an expanding map $g:N \to N$. Thus we
have the following commutative diagram

$$\begin{CD}
G  @>A>> G \\
 @Vp_1VV   @VVp_1V \\
N @>g>> N \\
@Vp_2VV    @VVp_2V \\
X @>f>> X \end{CD}$$ where both $p_1$ and $p_2$ are covering maps.

Now we will show that $g^*: H^l(N; \RR) \to H^l(N; \RR)$ is
expanding for any positive integer $l$. By Theorem~\ref{Nomizu},
$H^*(N;\RR) \cong H^* (\wedge \g^*, \delta).$ Note that this
isomorphism is natural and $g$ descends from $A$ or $a$. Thus the
expanding property of $g^*$ is equivalent to that
$$a^*: H^l(\wedge \g^*) \to H^l(\wedge \g^*)$$ is expanding for
any $l>0$.

By the construction of $A$, we know that $a$ is a linear expanding
map on the Lie algebra $\g$. Then by Lemma~\ref{dual-exp}, $a^*:
\g^* \to \g^*$ is also expanding. Next it follows from
Lemma~\ref{ext-exp} that $a^*: \wedge^l \g \to \wedge^l \g$ is
expanding for any $l>0$. Thus according to Lemma~\ref{chain-exp},
$a$ induces expanding homomorphisms on the cohomology level.

Therefore we have proved that $g^*$ is expanding on $H^l(N;\RR)$ for
any positive integer $l$. Note that $H^*(N; \RR) ={\rm Hom} (H_*(N;
\RR); \RR)$, by Lemma~\ref{dual-exp}, we know $g_*: H_l(N; \RR) \to
H_l(N; \RR)$ is also expanding for $l>0$.

Taking homology on the above diagram, we get the commutative diagram
$$\begin{CD}
H_l(N; \RR)  @>g_*>> H_l(N; \RR) \\
 @Vp_{2*}VV   @VVp_{2*}V \\
H_l(X; \RR) @>f_*>> H_l(X; \RR) \\
\end{CD}$$ By Lemma~\ref{expandhomo}, $p_{2*}$ is an
isomorphism, then it follows from Lemma~\ref{suj-exp} that $f_*$ is
an expanding map for $l>0$. At last, according to
Lemma~\ref{dual-exp}, $f^*: H^l(X; \RR) \to H^l(X; \RR)$ is
expanding as well. \hfill $\square$

\section{Proof of the main theorem}

\subsection{Maps between abelian groups}

The first lemma is a useful fact about exact sequence in homological
algebra. Except the elementary proof given here, it can also be
proved in the language of category theory by using pull-back
diagram. See Theorem II 6.2 and Lemma III 1.1 \& 1.2 in [HS] for
details.

\begin{lemma}
\label{exact} Let $A$, $B_1$, $B_2$ and $C$ be abelian groups and
$$A \xrightarrow{\varphi=(i_1, i_2)} B_1 \oplus
B_2 \xrightarrow{\psi= j_1 -j_2} C $$ be an exact sequence, then

(a) \ $i_1$ is injective $\Rightarrow$ $j_2$ is injective.

(b) \ $i_1$ is injective $\Leftarrow$ $j_2$ is injective, provided
that $\varphi$ is injective.

(c) \ $i_1$ is surjective $\Rightarrow$ $j_2$ is surjective,
provided that $\psi$ is surjective.

(d) \ $i_1$ is surjective $\Leftarrow$ $j_2$ is surjective.

(e) All the above claims are still hold if we substitute $i_1$ and
$j_2$ by $i_2$ and $j_1$ respectively.

\end{lemma}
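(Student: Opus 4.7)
The plan is to prove all four implications by straightforward diagram chasing in the (partial) exact sequence, using only exactness at $B_1\oplus B_2$, namely $\psi\circ\varphi=0$ and $\ker\psi=\mathrm{Im}\,\varphi$. Part (e) will then follow by swapping the roles of the two summands (and noting that the sign in $\psi=j_1-j_2$ is cosmetic for these kernel/image statements).

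For (a), I would start with $b_2\in\ker j_2$, observe that $(0,b_2)\in\ker\psi$ and so lies in $\mathrm{Im}\,\varphi$; writing $(0,b_2)=(i_1(a),i_2(a))$ forces $a=0$ by injectivity of $i_1$, hence $b_2=i_2(a)=0$. For (b), take $a\in\ker i_1$; since $\psi\circ\varphi=0$, we get $j_1(i_1(a))=j_2(i_2(a))$, i.e.\ $0=j_2(i_2(a))$, so $i_2(a)=0$ by injectivity of $j_2$, hence $\varphi(a)=0$ and then $a=0$ by the added hypothesis that $\varphi$ is injective.

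For (d), given $b_1\in B_1$, use surjectivity of $j_2$ to find $b_2$ with $j_2(b_2)=j_1(b_1)$; then $(b_1,b_2)\in\ker\psi=\mathrm{Im}\,\varphi$, so $b_1=i_1(a)$ for some $a$. For (c), given $c\in C$, use surjectivity of $\psi$ to write $c=j_1(b_1)-j_2(b_2)$; then surjectivity of $i_1$ produces $a$ with $i_1(a)=b_1$, and the identity $j_1\circ i_1=j_2\circ i_2$ (again from $\psi\circ\varphi=0$) gives $c=j_2(i_2(a)-b_2)$, so $j_2$ is surjective.

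I do not expect a real obstacle here: the only subtlety is bookkeeping about which extra hypothesis (the injectivity of $\varphi$ in (b), the surjectivity of $\psi$ in (c)) is genuinely needed, and checking that (a) and (d) go through with no side hypotheses. Part (e) will be obtained by applying the four proofs to the rewritten sequence with $B_1$ and $B_2$ interchanged, which costs at most an overall sign in $\psi$ and does not affect kernels or images.
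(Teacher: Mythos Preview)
Your proposal is correct and follows essentially the same diagram-chasing approach as the paper; the paper only writes out (a) and (c) explicitly (with a harmless sign slip in (c)) and leaves (b), (d) implicit, whereas you spell out all four cleanly. Your handling of (c) via $j_1\circ i_1=j_2\circ i_2$ is in fact tidier than the paper's version.
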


\begin{proof} Each statement can be shown by diagram chasing, for
example, we prove (a) and (c) as follows.

(a) Suppose $j_2(b_2)=0$, then $\psi(0,b_2)=0$. So $(0,b_2) \in {\rm
Im} \varphi$, we can find $a \in A$ such that $\varphi(a)=(0,b_2)$.
It follows from the injectivity of $i_1$ that $a=0$. Thus
$b_2=i_2(a)=0$, which implies $j_2$ is injective.

(c) For any $c \in C$, as $\psi$ is surjective, we may find $(b_1,
b_2) \in B_1 \oplus B_2$ such that $\psi(b_1,b_2)=c$. By the
surjectivity of $i_1$, pick up $a \in A$ such that
$\varphi(a)=(-b_1, b_2')$. Now $j_2(b_2+b_2')= \psi(0, b_2+b_2')=
\psi(b_1,b_2)+ \psi (-b_1, b_2')= c+ \psi \circ \varphi(a) =c$,
which means $j_2$ is surjective.


(e) is clear by symmetry.
\end{proof}

\begin{lemma}\label{expend5} Let $A,B$ be two finitely generated free abelian groups.
Let $f:A\to A$ be an expanding homomorphism. Let $g:B\to B$ be an
automorphism. Then there does not exist a non-zero homomorphism
$h:A\to B$ such that $h\circ f=g\circ h$.
\end{lemma}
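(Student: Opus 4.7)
My plan is to argue by contradiction: from a hypothetical nonzero $h$ I extract a nonzero $g$-invariant subspace $U$ of $B\otimes\RR$ on which $g$ is simultaneously expanding (inherited from $f$ via Lemma~\ref{suj-exp}) and of determinant $\pm 1$ (inherited from being a lattice automorphism), which is impossible when $\dim U\ge 1$. Concretely, suppose $h\ne 0$ and tensor $h\circ f=g\circ h$ with $\RR$; because $A$ is torsion-free, $h_\RR\colon A\otimes\RR\to B\otimes\RR$ is again nonzero, so its image $U:=h(A)\otimes\RR$ is a nonzero subspace of $B\otimes\RR$ of some dimension $r\ge 1$. The relation $g_\RR\circ h_\RR=h_\RR\circ f_\RR$ gives $g_\RR(U)\subseteq U$. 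Viewing $h_\RR$ as a surjection onto $U$, Lemma~\ref{suj-exp} applied to the commutative square with horizontal maps $f_\RR$ and $g_\RR|_U$ shows that $g_\RR|_U$ is expanding; in particular it is an isomorphism of $U$ (so $g(U)=U$), and every complex eigenvalue of $g_\RR|_U$ has absolute value strictly greater than $1$.

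The decisive step is that $U$ is a \emph{rational} subspace, being the real span of the finitely generated subgroup $L:=h(A)\subseteq B$. Its saturation $L':=U\cap B$ is therefore a full-rank lattice in $U$, i.e., a free abelian group of rank $r$ containing $L$. Since $g$ is a group automorphism of $B$ and $g(U)=U$, one has $g(L')=g(U)\cap g(B)=U\cap B=L'$, so $g|_{L'}$ is an automorphism of the free abelian group $L'$. In any $\ZZ$-basis of $L'$ it is represented by a matrix in $GL_r(\ZZ)$, whose determinant is $\pm 1$; but this very matrix also represents $g_\RR|_U$ in the same basis viewed as an $\RR$-basis of $U$, so the product of the eigenvalues of $g_\RR|_U$ has absolute value $1$, contradicting the fact that each such eigenvalue has absolute value greater than $1$ and that $r\ge 1$.

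The principal obstacle is bridging vector-space data and lattice data. The subgroup $h(A)$ itself is typically \emph{not} $g$-stable, because $f$ being expanding forces $|\det f_\RR|\ge 2$, so $f(A)$ is a proper subgroup of $A$ and $g(h(A))=h(f(A))$ is a proper subgroup of $h(A)$. Replacing $h(A)$ by its saturation $U\cap B$ repairs this, and this is precisely where the hypothesis that $g$ is an automorphism of the \emph{free abelian group} $B$—not merely of a real vector space—is used; the lemma genuinely fails in the latter setting, as the trivial example $A=B=\RR$, $f=g=2$, $h=\mathrm{id}$ shows.
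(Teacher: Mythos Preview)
Your proof is correct and follows essentially the same route as the paper's: both pass to the saturation of $h(A)$ in $B$ (the paper calls it $C$, obtained via an adapted basis from Smith normal form; you call it $L'=U\cap B$), show that $g$ restricted to it is expanding (you via Lemma~\ref{suj-exp} after tensoring with $\RR$, the paper via the splitting $A\cong\ker h\oplus h(A)$), and contradict this with the determinant constraint coming from $g$ being a lattice automorphism. Your version streamlines one step: by first noting that $g_\RR|_U$ expanding forces $g_\RR(U)=U$, you obtain $g(L')=L'$ as an equality and hence $g|_{L'}\in GL_r(\ZZ)$ directly, whereas the paper only records $g(C)\subseteq C$ and must then invoke $|\det M_2|\ge 1$ on the complementary block of the full matrix of $g$ to finish.
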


\begin{proof} Assume that there exists a non-zero homomorphism $h:A\to
B$ such that $h\circ f=g\circ h$. The image of $h$, $h(A)$, is a
subgroup of $B$. Thus it is a finitely generated free abelian group
and there exists a basis $e_1,\ldots, e_r$ of $B$ and non-zero
integers $a_1,\ldots, a_s$ such that $a_1e_1,\ldots,a_se_s$ form a
basis of $h(A)$. Since $f$ is expanding and $h\circ f=g\circ h$, the
restriction of $g$ to $h(A)$, $g|h(A):h(A)\to h(A)$, is expanding
(using the fact that $A$ is isomorphic to the direct sum of $\ker h$
and $h(A)$). Let $C$ denote the subgroup of $B$ spanned by
$e_1,\ldots,e_s$. Since $g$ maps $h(A)$ into $h(A)$, $g$ maps $C$
into $C$ and the restriction of $g$ to $C$, $g|C:C\to C$, is also
expanding. The matrix $M$ of $g$ with respect to the basis
$e_1,\ldots,e_r$ looks like
$$\left ( \begin{array}{cc} M_1 & * \\ 0 & M_2 \end{array} \right
),$$ where $M_1$ is an $s\times s$ integer matrix and $M_2$ is an
$(r-s)\times (r-s)$ integer matrix. Since $g|C:C\to C$ is expanding,
the absolute value of the determinant of $M_1$ is greater than $1$.
Since $M_2$ is an integer matrix and non-degenerate ($g$ is an
automorphism, thus $M$ is non-degenerate), the absolute value of the
determinant of $M_2$ is greater than or equal to $1$. Thus the
absolute value of the determinant of $M$ is greater than $1$. But
since $g$ is an automorphism, the absolute value of the determinant
of $M$ equals $1$, a contradiction. \end{proof}

\subsection{Related results in algebraic topology}



Most of the techniques in algebraic topology involved thereafter can
be found in standard textbooks such as \cite{Sp}. However for the
reader¡¯s convenience, we will recall some of them:

\begin{theorem} (Exact sequence for pair $(X,A)$)
Let $A$ be a subspace of  a topological space $X$. there is a
(co)homology long exact sequence associated to the pair $(X,A)$:
$$\cdots \rightarrow  H_l(A)
\stackrel{i}\rightarrow H_l(X) \stackrel{j} \rightarrow H_l(X, A)
\stackrel{\partial} \rightarrow  H_{l-1}(A) \stackrel{i}\rightarrow
H_{l-1}(X)\cdots$$
$$\cdots \rightarrow  H^l(X,A)
\stackrel{j}\rightarrow H^l(X) \stackrel{i} \rightarrow H^l(A)
\stackrel{\delta} \rightarrow  H^{l+1}(X,A) \stackrel{j}\rightarrow
H^{l+1}(X)\cdots$$
\end{theorem}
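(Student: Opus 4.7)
This is a classical theorem from singular (co)homology theory, recalled here for the reader's convenience with a reference to Spanier. The plan is to derive it from the snake (zig--zag) lemma applied to a short exact sequence of (co)chain complexes, which is the standard construction in any textbook treatment.

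First I would introduce the relative singular chain complex $C_*(X,A) := C_*(X)/C_*(A)$, with real coefficients. The boundary operator on $C_*(X)$ preserves the subcomplex $C_*(A)$ (since the boundary of a singular simplex in $A$ is still a chain in $A$), so it descends to a well-defined boundary on the quotient. By construction we obtain a short exact sequence of chain complexes
$$0 \to C_*(A) \xrightarrow{i_\#} C_*(X) \xrightarrow{j_\#} C_*(X,A) \to 0,$$
where $i_\#$ is induced by the inclusion $A \hookrightarrow X$ and $j_\#$ is the quotient projection. Applying the snake lemma to this short exact sequence yields the long exact homology sequence of the statement, with connecting homomorphism $\partial \colon H_l(X,A) \to H_{l-1}(A)$ defined by lifting a relative cycle $c$ to a chain $\tilde c \in C_l(X)$ and noting that $\partial \tilde c$ automatically lies in $C_{l-1}(A)$ because $j_\#(\partial \tilde c) = \partial j_\#(\tilde c) = \partial c = 0$.

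For the cohomology version I would apply the contravariant functor $\mathrm{Hom}_\RR(-,\RR)$ to the short exact sequence above. Because all coefficients are taken in the field $\RR$, every chain group is a free $\RR$-module, so this functor is exact and produces a short exact sequence of cochain complexes
$$0 \to C^*(X,A) \to C^*(X) \to C^*(A) \to 0.$$
Applying the snake lemma once more yields the cohomology long exact sequence with connecting homomorphism $\delta$ of degree $+1$, defined dually: a cocycle on $A$ is extended to a cochain on $X$, whose coboundary is pulled back to a relative cocycle on $(X,A)$.

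The main obstacle is purely bookkeeping rather than conceptual: one must verify well-definedness of $\partial$ and $\delta$ modulo (co)boundaries, independence of the chain-level lifts, and exactness at each of the three kinds of terms via diagram chasing. Since this is entirely standard, the practical approach is simply to cite Spanier [Sp] and move on; we include the statement only to fix notation for the Mayer--Vietoris arguments that follow in \S 3.
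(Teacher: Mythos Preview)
Your proposal is correct and aligns with the paper's treatment: the paper does not prove this theorem at all but simply recalls it as a standard fact from \cite{Sp}, exactly as you anticipate in your final paragraph. Your snake-lemma sketch is accurate but unnecessary here, since the statement is included only to fix notation for later use.
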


\begin{theorem} (Mayer-Vietoris sequence)
Let $X$ be a topological space and $A, B$ be two subspaces whose
interiors cover $X$. The Mayer-Vietoris sequence in singular
homology for the triad $(X, A, B)$ is a long exact sequence relating
the singular homology groups  of the spaces $X, A, B$, and the
intersection $A \cap B$.
$$\cdots \rightarrow  H_{l+1}(X)
\stackrel{\partial}\rightarrow H_l(A \cap B) \stackrel{(i_1,i_2)}
\longrightarrow H_l(A) \oplus H_l(B) \stackrel{j_1-j_2}
\longrightarrow H_{l}(X) \stackrel{\partial_*}\rightarrow H_{l-1}(A
\cap B) \cdots$$ where the homomorphisms $i_1,i_2,j_1,j_2$ are
induced from the inclusions  $A\cap B \subset A$, $A \cap B  \subset
B$, $A\subset X$ and $B \subset X$ respectively.
\end{theorem}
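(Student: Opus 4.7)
The plan is to derive the Mayer--Vietoris sequence from a short exact sequence of singular chain complexes, following the standard argument (see, e.g., \cite{Sp}). I would organize the proof in three steps.

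First, introduce the subcomplex of small chains. Let $C_*^{A,B}(X) \subseteq C_*(X)$ be the subcomplex generated by singular simplices whose image lies entirely in $A$ or entirely in $B$. There is an evidently exact sequence of chain complexes
$$0 \longrightarrow C_*(A \cap B) \xrightarrow{(\iota_1, \iota_2)} C_*(A) \oplus C_*(B) \xrightarrow{k_1 - k_2} C_*^{A,B}(X) \longrightarrow 0,$$
where all four arrows are induced by the obvious inclusions; exactness is immediate from the chain-level definitions, with the signs chosen so that the maps commute with $\partial$.

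Second, show that the inclusion $C_*^{A,B}(X) \hookrightarrow C_*(X)$ is a chain homotopy equivalence. This is where the open-cover hypothesis $\mathrm{Int}(A) \cup \mathrm{Int}(B) = X$ is genuinely used. The key tool is the iterated barycentric subdivision operator $S \colon C_*(X) \to C_*(X)$ together with the classical chain homotopy $T$ satisfying $\partial T + T \partial = \mathrm{id} - S$. By the Lebesgue number lemma applied to each singular simplex $\sigma \colon \Delta^n \to X$, there is an integer $m(\sigma)$ with $S^{m(\sigma)} \sigma \in C_*^{A,B}(X)$. Patching these choices simplex-by-simplex and telescoping the homotopies $\sum_{k < m(\sigma)} T S^k$ yields a chain map $\rho \colon C_*(X) \to C_*^{A,B}(X)$ together with an explicit chain homotopy from $\iota \rho$ to $\mathrm{id}$ on $C_*(X)$; the restriction of $\rho$ to $C_*^{A,B}(X)$ gives the other side of the homotopy equivalence.

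Third, apply the zig-zag lemma to the short exact sequence from step one to obtain a long exact sequence in homology, and use step two to replace $H_*(C_*^{A,B}(X))$ by $H_*(X)$, producing the displayed Mayer--Vietoris sequence. The connecting homomorphism $\partial_*$ then has its usual description: represent a class in $H_l(X)$ by a small cycle, split it as a chain in $A$ plus a chain in $B$, and take the boundary, which lands in $A \cap B$. The main obstacle is step two: steps one and three are formal homological algebra, but the subdivision quasi-isomorphism requires the explicit construction of $S$ and $T$ and the careful simplex-by-simplex patching, and it is the only place the topological cover hypothesis is actually exploited.
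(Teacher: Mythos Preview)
Your proposal is correct and is the standard textbook argument (essentially the one in \cite{Sp}). Note, however, that the paper does not give its own proof of this theorem: it is merely quoted in \S3.2 as a well-known background result from algebraic topology, with a reference to \cite{Sp}, so there is nothing to compare against beyond observing that your outline matches the cited source.
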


Let  $N \stackrel{\pi}\to X$  be a $q$-disk bundle and $ \partial N
\stackrel{\pi}\to X$ be the associated $(q-1)$-sphere bundle. Then a
Thom class for the bundle is an element $t \in H^q(N,\partial N)$
such that the restriction of $t$ to each fiber is non zero. If the
Thom class exists, the disk bundle is called orientable. The Euler
class  $e$ of the disk bundle or the associated sphere bundle is the
image of $t$ under the map $ H^q(N,\partial N) \stackrel{j}
\rightarrow  H^{q}(N) \stackrel{({\pi}^*)^{-1}} \rightarrow H^q(X)$.

\begin{theorem}(Naturality of Euler class) If $f:X\to X'$ is covered by
an orientation preserving bundle map $\xi\to\xi'$, then
$e(\xi)=f^*e(\xi')$.
\end{theorem}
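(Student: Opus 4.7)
The plan is to trace both sides back to the Thom class via the defining relation $\pi^* e(\xi) = j(t)$, then chase the resulting diagram using naturality. Let $F: N \to N'$ be the bundle map covering $f: X \to X'$, so $\pi' \circ F = f \circ \pi$ and $F$ restricts to a map of pairs $(N, \partial N) \to (N', \partial N')$. Write $t \in H^q(N, \partial N)$ and $t' \in H^q(N', \partial N')$ for the respective Thom classes.

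First I would prove $F^* t' = t$. The Thom class of an oriented $q$-disk bundle is uniquely characterized by the property that its restriction to each fiber is the designated generator of $H^q(D^q, S^{q-1}) \cong \RR$ picked out by the orientation. Since $F$ carries each fiber of $\xi$ homeomorphically and orientation-preservingly onto a fiber of $\xi'$, the class $F^* t'$ restricts on every fiber to the same designated generator, and so must equal $t$ by this uniqueness.

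Next I would combine this with two instances of naturality: naturality of the map $j$ in the long exact sequence of a pair (giving $F^* \circ j' = j \circ F^*$), and functoriality of cohomology applied to $\pi' F = f \pi$ (giving $\pi^* \circ f^* = F^* \circ \pi'^*$). Stringing these together,
\[
\pi^*\bigl(f^* e(\xi')\bigr) \;=\; F^*\bigl(\pi'^* e(\xi')\bigr) \;=\; F^*\bigl(j'(t')\bigr) \;=\; j\bigl(F^* t'\bigr) \;=\; j(t) \;=\; \pi^* e(\xi).
\]
Because $N$ deformation retracts onto its zero section $X$, the map $\pi^*: H^q(X) \to H^q(N)$ is an isomorphism; cancelling it yields $f^* e(\xi') = e(\xi)$.

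The only step with real content is the identification $F^* t' = t$; the rest is pure diagram chasing. The uniqueness of the Thom class used there is standard — it follows from the Thom isomorphism theorem, or directly from a fiberwise application of the long exact sequence of the pair $(D^q, S^{q-1})$ — so I do not expect any serious obstacle beyond careful bookkeeping of the orientation conventions built into the phrase \emph{orientation preserving bundle map}.
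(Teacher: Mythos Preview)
Your argument is correct and is exactly the standard textbook proof: pull back the Thom class using its fiberwise characterization, then use naturality of $j$ in the long exact sequence of the pair together with $\pi' F = f\pi$ and the fact that $\pi^*$ is an isomorphism. There is nothing to criticize.

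As for comparison with the paper: there is nothing to compare against. The paper does not prove this statement; it is listed in \S3.2 among several results ``recalled'' from standard references (specifically Spanier), without any argument supplied. So your write-up is not an alternative route but rather a proof where the paper simply cites one.
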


\begin{theorem} (Thom isomorphism theorem)
Let  $N \stackrel{\pi}\to X$  be an orientable $q$-disk bundle and $
\partial N \stackrel{\pi}\to X$ be the associated $(q-1)$-sphere
bundle with Thom class $t \in H^q(N,\partial N)$. Then the following
homomorphisms are isomorphisms for all $l \in Z$
$$\Phi^*: H^l(X)  \to H^{l+q}(N, \partial N)$$
$$\Phi_*: H_{l+q}(N, \partial N)\to H_l(X) $$
where $\Phi^*(z)=\pi^*(z) \cup t$ and $\Phi_*(x)=\pi_*(t \cap x)$
\end{theorem}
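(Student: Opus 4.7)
The plan is to prove the theorem in two stages: first verify it for a trivial bundle $X \times D^q \to X$, and then deduce the general case by a Mayer--Vietoris induction over a trivializing open cover of $X$.

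For the trivial case, the K\"unneth formula applied to the product pair gives
$$H^{l+q}(X \times D^q, X \times S^{q-1}) \;\cong\; \bigoplus_{i+j=l+q} H^i(X) \otimes H^j(D^q, S^{q-1}).$$
Since $H^j(D^q,S^{q-1})$ equals $\RR$ for $j=q$ and vanishes otherwise, the right-hand side collapses to $H^l(X)$, and the isomorphism sends $z \in H^l(X)$ to $\pi^*(z)\cup t$, where $t$ generates $H^q(D^q,S^{q-1})$. This is exactly the map $\Phi^*$, so the theorem holds on a trivial bundle.

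For a general bundle over a compact base $X$, I would choose a finite cover $U_1,\ldots,U_k$ of $X$ by open sets over which $\pi$ trivializes, and induct on $k$. Setting $U = U_1 \cup \cdots \cup U_{k-1}$ and $V = U_k$, the Mayer--Vietoris sequences for the covering $\{U,V\}$ of $X$ and the covering $\{\pi^{-1}(U), \pi^{-1}(V)\}$ of $N$ (the latter taken relative to $\partial N$) fit into a ladder whose vertical maps are cup product with the restriction of the Thom class. Naturality of the Thom class under bundle maps (stated just above the theorem in the excerpt) makes this ladder commute up to the standard Mayer--Vietoris sign conventions. The inductive hypothesis furnishes the Thom isomorphism over $U$, $V$, and $U \cap V$, and the five lemma then promotes it to $X$.

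The homology statement follows by the same inductive scheme with cap product $\pi_*(t \cap -)$ replacing $\pi^*(-)\cup t$; alternatively, because everything is over $\RR$ with finite-dimensional vector spaces in each degree (at least for the compact bases that appear in this paper), one can dualize via universal coefficients. The main technical obstacle I anticipate is purely bookkeeping: checking that the Mayer--Vietoris ladder genuinely commutes once signs are pinned down, and that the global Thom class restricts consistently to the local Thom classes of the trivializations used in the induction. Beyond that, all the essential geometric content is concentrated in the trivial-bundle case, where it reduces to the statement that $H^*(D^q,S^{q-1})$ is one-dimensional in the top degree.
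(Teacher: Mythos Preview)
The paper does not actually prove this theorem. It is listed among several standard results (the long exact sequence of a pair, Mayer--Vietoris, naturality of the Euler class, Thom isomorphism) that are recalled from textbooks ``for the reader's convenience,'' with an explicit reference to Spanier. So there is no ``paper's own proof'' to compare against.

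Your outline is the standard textbook argument and is correct in spirit. Two small points worth tightening if you ever write it out in full: first, the K\"unneth step should be stated for the trivial bundle over an \emph{arbitrary} base, not just the compact $X$ itself, since the open sets $U$, $V$, $U\cap V$ appearing in the induction need not be compact; over a field this causes no trouble, but it is worth saying explicitly. Second, the inductive step as you have phrased it requires the Thom isomorphism over $U = U_1\cup\cdots\cup U_{k-1}$, but the bundle over $U$ need not be trivial, so the induction hypothesis must be that the theorem holds for any bundle admitting a trivializing cover by fewer than $k$ sets, not merely for trivial bundles. With those two clarifications the Mayer--Vietoris/five-lemma scheme goes through exactly as you describe.
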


From the Thom isomorphism theorem one can construct the Gysin
sequence as follows:

Consider the commutative diagram
$$\begin{CD} H_{l+1}(N, \partial N) @>\partial>> H_l(\partial N) @>>> H_l(N) @>>>
H_l(N, \partial N) @ >\partial>> H_{l-1}(\partial N) \\
 @VV\Phi_*V   @| @VV\pi_*V @VV\Phi_*V @|\\
H_{l-q+1}(X) @>>> H_l(\partial N) @>>> H_l(X) @>\cap e >> H_{l-q}(X)
@>>> H_{l-1}(\partial N)\end{CD}.$$ The first line is the long exact
sequence for the pair $(N, \partial N)$. The vertical map $\pi_*$ is
an isomorphism as $N\simeq X$. The vertical map $\Phi_*$ is also an
isomorphism, called \textit{Thom isomorphism}. Therefore the second
line is also a long exact sequence, which is named as \textit{Gysin
sequence}. Moreover the map from $H_l(X)$ to $H_{l-q}(X)$ is defined
by $x\mapsto  e\cap x$, where $e$ is the Euler class of the disk
bundle $\xi$, $\cap$ means cap product.

\subsection{Topology of sphere bundles}

\begin{lemma} \label{euler0} Let $\varphi$ be an expanding map on the closed oriented manifold
$X$ and $\xi$ be an oriented disk bundle $N=X \tilde{\times} D^q
\stackrel{\pi}\to X$ such that $\varphi$ can be lifted to a
hyperbolic bundle embedding $e$ on $N$. Then

(a) the Euler class of $\xi$, $e(\xi)=0 \in H^q(X)$;

(b) $H_l(\partial N) \cong H_l(X) \oplus H_{l-q+1}(X)$;

(c) Let $\{ [c_1], [c_2], \cdots\}$ be a basis for $H_{l-q+1}(X)$,
where each $c_i$ is a cycle in $X$, then $[(\pi|\partial
N)^{-1}(c_1)], [(\pi|\partial N)^{-1}(c_2)], \cdots$ are linearly
independent in $H_l(\partial N)$. As a consequence, their span is a
subspace $U \subset H_l(\partial N)$ with $\dim U= \dim
H_{l-q+1}(X)$.
\end{lemma}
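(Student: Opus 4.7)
The plan is to derive (a) from naturality of the Euler class combined with Theorem~\ref{main-exp}, and then to use $e(\xi)=0$ to split the Gysin sequence displayed in the excerpt and obtain (b) and (c). The conceptual content of (a) is that the hyperbolic bundle embedding $e$ makes $\xi$ pull back to itself under $\varphi$, so $e(\xi)$ is $\varphi^*$-invariant; since $\varphi^*$ is expanding, the only such invariant class is $0$.

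For part (a), the fiber-preserving embedding $e$ restricts on each fiber $D^q$ to an orientation-preserving scaling by $\lambda$, which extends linearly to the ambient vector bundle of $\xi$. This gives an oriented bundle map covering $\varphi$ that is a linear isomorphism on every fiber, hence factors through an oriented bundle isomorphism $\xi \cong \varphi^*\xi$ by the universal property of the pullback. Naturality of the Euler class then yields $e(\xi)=\varphi^*e(\xi)$ in $H^q(X;\RR)$. By Theorem~\ref{main-exp}, $\varphi^*$ is expanding on $H^q(X;\RR)$, so $1$ is not an eigenvalue and the only fixed class is $0$; therefore $e(\xi)=0$.

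Given $e(\xi)=0$, every cap-product map $\cap e\colon H_l(X)\to H_{l-q}(X)$ in the Gysin sequence vanishes, so the sequence breaks into short exact sequences
$$0 \to H_{l-q+1}(X) \xrightarrow{\tau} H_l(\partial N) \xrightarrow{(\pi|\partial N)_*} H_l(X) \to 0,$$
which split as vector spaces over $\RR$; this proves (b). For (c), the injection $\tau$ coincides with $\partial\circ\Phi_*^{-1}$ from the Gysin construction. For a cycle $c$ representing $[c]\in H_{l-q+1}(X)$, the relative $(l+1)$-cycle $\pi^{-1}(c)\in H_{l+1}(N,\partial N)$ satisfies $\Phi_*[\pi^{-1}(c)]=[c]$ by a direct computation from $\Phi_*(x)=\pi_*(t\cap x)$, so $\Phi_*^{-1}[c]=[\pi^{-1}(c)]$. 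The connecting homomorphism then sends this class to its $\partial N$-boundary $[(\pi|\partial N)^{-1}(c)]$, giving $\tau[c_i]=[(\pi|\partial N)^{-1}(c_i)]$; injectivity of $\tau$ delivers the required linear independence and dimension identity.

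The main subtlety is the upgrade of the fiberwise embedding $e$ to a bundle isomorphism $\xi\cong\varphi^*\xi$ in part (a), needed so that naturality of the Euler class applies cleanly. Passing to the underlying vector bundle, on which the fiber scaling lifts to a linear automorphism covering $\varphi$, handles this. The remainder is routine algebraic topology once $e(\xi)=0$ is in hand.
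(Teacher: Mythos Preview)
Your proof is correct and follows the same approach as the paper for all three parts: naturality of the Euler class together with Theorem~\ref{main-exp} for (a), then the split Gysin sequence for (b) and the geometric identification of $\tau$ via the Thom isomorphism for (c). The only place the paper is more explicit is in (a): the fiberwise restriction of $e$ is an affine map $y\mapsto c(x)+\lambda r(x)y$ rather than a pure scaling (the image disk need not be centered at the origin), so the paper writes out this local form, drops the translation $c(x)$ to obtain the genuine bundle map $(x,y)\mapsto(\varphi(x),r(x)y)$, and replaces $e$ by $e^2$ if necessary to ensure the map is orientation preserving---precisely the ``upgrade'' you flag as the main subtlety.
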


\begin{proof} (a) We may assume that $e$ is orientation preserving, otherwise we use
$e^2$ instead of $e$. The embedding $e: X \tilde{\times} D^q \to X
\tilde{\times} D^q$ is not a bundle map, however we can modify it to
become a bundle map. Since $\xi$ may not be a trivial bundle, we use
local presentation of $\xi$. For each $x_0\in X$, there is a
neighborhood $U$ of $x$ such that $\xi|U$ is a trivial bundle, that
is, $\xi|U\cong U\times D^q$. Similarly, there is a neighborhood $V$
of $\varphi(x_0)$ such that $\xi|V$ is a trivial bundle, that is,
$\xi|V\cong V\times D^q$. Possibly shrinking $U$, we may assume
$\varphi(U)\subset V$. According to Definition 1.6, we may assume on
$\xi|U\cong U\times D^q$,
$$e(x,y)=(\varphi (x), c(x)+\lambda r(x)y), x\in U, y\in D^q,$$
where $\lambda\in (0,1)$, $c(x)$ and $r(x)$ are functions from $U$
to $D^q$ and $SO(q)$ respectively with $|c(x)|+\lambda <1$. The the
map defined by $$(x,y)\to (\varphi (x),r(x)y)$$ gives a bundle map
from $\xi|U$ to $\xi|V$ which is a lift of $\varphi$. This map is
independent of the trivializations of $\xi$ over $U$ and $V$ we
chose. Thus we have a bundle map from $N$ to $N$ which is a lift of
$\varphi$.

Thus by the naturality of Euler class, we have $\varphi^* e(\xi) =
e(\xi)$.  Note that by Theorem~\ref{main-exp}, $\varphi^*$ is an
expanding map on $H^q(X)$. So $1$ cannot be an eigenvalue of
$\varphi^*$, consequently $\varphi^* e(\xi) = e(\xi)$ implies
$e(\xi)=0$.

(b) In our current situation since the euler class  $e=0$, the Gysin
sequence splits into short exact sequence \begin{equation} 0 \to
H_{l-q+1} (X) \stackrel{\rho}\to H_l(\partial N) \to H_{l}(X) \to
0.\label{split}\end{equation} Then $H_l(\partial N) \cong H_l(X)
\oplus H_{l-q+1}(X)$ as all the homology groups here are over $\RR$,
and thus are vector spaces.



(c) We need to understand more about the map $\rho: H_{l-q+1}(X) \to
H_l(\partial N)$ in (\ref{split}). According to the geometric
interpretation of cap product, for any $(l-q+1)$-cycle $c$ in $X$,
$\Phi^{-1}_{*}([c])$ is exactly represented by $\pi^{-1}(c)$, where
$\Phi_*: H_{l+1}(N, \partial N) \to H_{l-q+1}(X)$ is the Thom
isomorphism. Its boundary is $(\pi|\partial N)^{-1}(c)$, which
represents $\rho([c]) \in H_l(\partial N)$. Now the result follows
from the injectivity of $\rho$.
\end{proof}

\medskip
\begin{lemma} \label{shrink}
Let  $f: X=B\tilde \times S^q \to Y$ be a map, where $B$ is a finite
CW-complex and $Y$ is a $K(\pi, 1)$ space. If $q \geq 2$, then $f$
is homotopic to $\bar f \circ \pi$, where $\pi: B\tilde \times S^q
\to B$ is the projection, and $\bar f: B\to Y$.
\end{lemma}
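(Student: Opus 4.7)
\emph{Proof strategy.} The plan is to reduce the lemma to the standard classification of maps into a $K(\pi,1)$. Recall that for any path-connected CW-complex $Z$ and any $K(\pi,1)$-space $Y$ with $\pi=\pi_1(Y)$, the assignment $g\mapsto g_*$ gives a natural bijection
\[
[Z, Y] \;\xrightarrow{\;\cong\;}\; \operatorname{Hom}(\pi_1(Z), \pi)\big/\text{conjugation}.
\]
Both $B$ and $X=B\tilde\times S^q$ have the homotopy type of CW-complexes (the latter as a sphere bundle over the finite CW-complex $B$), so this classification applies to each. Consequently, once one knows that $\pi_{*}:\pi_1(X)\to\pi_1(B)$ is an isomorphism, the induced map $\pi^{*}:[B,Y]\to[X,Y]$ is automatically a bijection, and the lemma follows by taking $\bar f$ to be any representative of $(\pi^{*})^{-1}[f]$.

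The first step is therefore to verify that $\pi_{*}$ is an isomorphism on fundamental groups. For this I would invoke the long exact sequence of homotopy groups of the sphere-bundle fibration $S^q\hookrightarrow X\xrightarrow{\;\pi\;}B$:
\[
\pi_1(S^q)\;\longrightarrow\;\pi_1(X)\;\xrightarrow{\;\pi_{*}\;}\;\pi_1(B)\;\longrightarrow\;\pi_0(S^q).
\]
The hypothesis $q\ge 2$ gives $\pi_1(S^q)=0$ and $\pi_0(S^q)=0$, so exactness forces $\pi_{*}$ to be an isomorphism.

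The second and final step is to spell out the factorization. Given $f:X\to Y$, let $\bar f:B\to Y$ be any map realizing the homomorphism $f_{*}\circ\pi_{*}^{-1}:\pi_1(B)\to\pi$, whose existence is guaranteed by the representation theorem applied to $B$. Then $(\bar f\circ\pi)_{*}=f_{*}$ as maps $\pi_1(X)\to\pi$, and the representation theorem applied now to $X$ turns this equality of induced homomorphisms into a homotopy $\bar f\circ\pi\simeq f$, as required.

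The only subtlety, and hence the main (but routine) obstacle, is basepoint bookkeeping: the bijection above identifies free homotopy classes with \emph{conjugacy} classes of homomorphisms, so $\bar f$ is determined only up to conjugation. Fixing compatible basepoints along the composite $X\xrightarrow{\pi}B\to Y$ and working throughout in the pointed category eliminates the ambiguity, and this is the only place where care is needed.
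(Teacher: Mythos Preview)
Your proof is correct but follows a different route from the paper's. The paper argues by hand: it shows, by induction over the skeleta of $B$, that $f$ extends to a map $\tilde f: B\tilde\times D^{q+1}\to Y$ on the associated disk bundle. At each stage the obstruction to extending over a cell $\Delta_i\times D^{q+1}$ lies in $\pi_{i+q}(Y)$, which vanishes since $i+q\ge q\ge 2$ and $Y$ is aspherical. Once the extension $\tilde f$ exists, the deformation retraction of the disk bundle onto its zero section $B$ yields $f\simeq \bar f\circ\pi$ with $\bar f=\tilde f|_B$.

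Your argument instead packages the same obstruction theory into the single black-box classification $[Z,Y]\cong\operatorname{Hom}(\pi_1 Z,\pi)/\text{conj}$, and then reduces everything to the observation that $\pi_*:\pi_1(X)\to\pi_1(B)$ is an isomorphism, read off from the homotopy exact sequence of the fibration. This is cleaner and more conceptual, and makes the role of the hypothesis $q\ge 2$ transparent. The paper's approach, in exchange, is more self-contained (it essentially re-proves the relevant special case of the classification theorem) and produces the factoring map $\bar f$ explicitly as the restriction of the extension $\tilde f$ to the zero section.
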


\begin{proof} First we show by induction that $f$ can be
extended to a map  $\tilde f: B\tilde \times D^{q+1} \to Y$ where
$\partial (B\tilde \times D^{q+1})=B\tilde \times S^{q}$.

Let $B_i$ be the $i$-th dimensional skeleton of $B$, and $\tilde
X_i=X\cup B\tilde \times D^{q+1}|_{B_i}$, where $i= 0, 1,\cdots,
\dim B$. Since $q\ge 2$ and $Y$ is $K(\pi, 1)$, clearly we can
extend $f$ to $\tilde f|\tilde X_0$. Suppose we have extended $f$ to
$\tilde f|\tilde X_{i-1}$. Then for each $i$-cell $\Delta_i$ in $B$,
$\tilde f|$ has defined in $\partial (\Delta_i\times
D^{q+1})=(\partial \Delta_i\times D^{q+1}\cup \Delta_i\times S^{q})
\cong S^{i+q}$, and still by the $K(\pi,1)$ property of $Y$, we may
extend $\tilde f|\partial (\Delta_i\times D^{q+1})$ to $
\Delta_i\times D^{q+1}$. (Here we just write $\Delta_i\times
D^{q+1}$ because any disk bundle on $\Delta_i$ is trivial.)
Therefore $\tilde f$ can be extended to $\tilde X_i$ after finitely
many such steps.

Then the lemma follows easily. \end{proof}

\subsection{$\Omega(f)=\textit{DE
attractors}$ implies $M=\QQ \textit{-homology sphere}$}

Now we are ready to prove

\noindent \textbf{Theorem \ref{main1}.} \  \textit{If there exists a
diffeomorphism $f: M \to M$ on a closed, oriented $n$-manifold $M$
such that $\Omega(f)$ consists of finitely many oriented $(\pm)$
attractors derived from expanding maps, then $M$ is a rational
homology sphere. Moreover all those attractors are of type $(n-2,
2)$}.

\medskip
If $\Omega(f)$ consists of finitely many DE attractors,  we know
$\Omega(f)$ must be the union of two disjoint DE attractors $S_1$
and $S_2$, one is the attractor of $f$ and the other is the
attractor of $f^{-1}$ (see [JNW] Lemma 1). Suppose the two defining
disk bundles of attractors are $N_1\cong X_1^{p_1} \tilde{\times}
D^{q_1}$ and $N_2 \cong X_2^{p_2} \tilde{\times} D^{q_2}$
respectively. We have
$$S_1=\bigcap_{h=1}^{\infty} f^h(N_1), \  S_2=\bigcap_{h=1}^{\infty} f^{-h}(N_2),$$

\medskip

\begin{lemma}
\label{onebundle} Suppose an orientable manifold $N$ is a disk
bundle on a closed oriented manifold $X$. Let the embedding $e: N
\to {\rm Int} (N)$ be the lift of an expanding map on $X$,
$N'=e(N)$, $K=N \backslash {\rm Int}(N')$. For any integer $l$, the
two maps $H_l(\partial N)
 \to H_l(K)   \leftarrow H_l(\partial N'),$ induced by the inclusions $\partial N \subset K,
\partial N' \subset K $ respectively are isomorphisms. (cf. Figure
1)
\end{lemma}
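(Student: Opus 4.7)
I would handle the two inclusions separately: first show that $\partial N'\hookrightarrow K$ induces an $H_*$-isomorphism by combining Mayer--Vietoris for the decomposition $N=K\cup N'$ with the fact that $N'\hookrightarrow N$ is homotopic to the expanding map $\varphi$; then deduce the corresponding statement for $\partial N\hookrightarrow K$ by Lefschetz duality on the compact oriented cobordism $K$.

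Since the embedding $e\colon N\to N$ preserves the disk-bundle structure and covers $\varphi\colon X\to X$, the projection $\pi\colon N\to X$ and the zero section $s\colon X\to N$ form an inverse pair of homotopy equivalences, while $e\circ s\colon X\to N'$ is also a homotopy equivalence. Under these identifications the inclusion $i\colon N'\hookrightarrow N$ corresponds to $\varphi$, since $\pi(e(s(x)))=\pi(\varphi(x),c(x))=\varphi(x)$. By Lemma~\ref{expandhomo}(b), $\varphi_*$ is an automorphism, so $i_*\colon H_*(N')\to H_*(N)$ is an isomorphism. The Mayer--Vietoris sequence for the triad $(N;K,N')$ with $K\cap N'=\partial N'$ then reads
$$\cdots\to H_l(\partial N')\xrightarrow{(i_{K*},\,i_{N'*})} H_l(K)\oplus H_l(N')\xrightarrow{j_{K*}-j_{N'*}} H_l(N)\to\cdots,$$
and the surjectivity of $j_{N'*}$ forces the connecting maps to vanish, giving a short exact sequence in each degree. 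Dimension counting then gives $\dim H_l(\partial N')=\dim H_l(K)$, and a brief diagram chase using $j_{K*}(i_{K*}x)=j_{N'*}(i_{N'*}x)$ shows that $i_{K*}\colon H_l(\partial N')\to H_l(K)$ is surjective, hence an isomorphism.

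For the other side, $K$ is a compact oriented $n$-manifold ($n=p+q$) with boundary $\partial K=\partial N\sqcup\partial N'$, inheriting its orientation from $N$. Lefschetz duality (with $\RR$-coefficients) gives
$$H_l(K,\partial N;\RR)\cong H^{n-l}(K,\partial N';\RR),$$
whose right-hand side has the same dimension as $H_{n-l}(K,\partial N';\RR)=0$ (since we work over a field). Thus $H_l(K,\partial N)=0$ for every $l$, and the long exact sequence of $(K,\partial N)$ yields the required isomorphism $H_l(\partial N)\cong H_l(K)$. The main subtlety is the first identification: although $\pi|_{N'}\colon N'\to X$ is $\deg\varphi$-sheeted, it is the intrinsic bundle structure $\pi\circ e^{-1}$ on $N'$ (inherited from $N$ via $e$) that realizes the inclusion $N'\hookrightarrow N$ as $\varphi$ up to homotopy, allowing us to invoke Lemma~\ref{expandhomo}(b); the rest is routine bookkeeping with pair sequences, Mayer--Vietoris, and Poincar\'e--Lefschetz duality.
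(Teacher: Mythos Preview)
Your proof is correct and follows essentially the same route as the paper's: Mayer--Vietoris for $N=K\cup N'$ together with the fact that $N'\hookrightarrow N$ induces an isomorphism on $H_*$ (via Lemma~\ref{expandhomo}) gives that $\partial N'\hookrightarrow K$ is an $H_*$-isomorphism; then Poincar\'e--Lefschetz duality on the cobordism $K$ transfers this to $\partial N\hookrightarrow K$. The only cosmetic difference is that where you do the dimension count and diagram chase by hand, the paper packages that step as an application of Lemma~\ref{exact}.
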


\begin{figure}[htbp]
\begin{center}
\resizebox{!}{5cm}{
\includegraphics*[0mm, 0mm][70mm,80mm]{fig1.eps}
}
\end{center}
\caption{\label{mvone} The sketch of self embedding of a disk
bundle}
\end{figure}

\begin{proof} First, consider the Mayer-Vietoris long exact sequence for the pair
$(N',K)$. We know $$H_{l}(N') \oplus H_{l}(K) \stackrel{\psi}\to
H_{l}(N)\stackrel{\partial}\to H_{l-1}(\partial N')$$ is exact. $N
\simeq X$ implies $H_*(N)\cong H_*(X)$ through the projection map.
Then it follows from Lemma \ref{expandhomo} that the map from
$H_{l}(N')$ to $H_{l}(N)$ is surjective and thus the first map
$\psi$ is surjective. By the exactness, the second map $\partial$
must be $0$ for any $l$ and we get the short exact sequence
$$0 \to H_l(\partial N') \xrightarrow{\varphi=(i_1, i_2)}  H_l(N') \oplus
H_l(K) \xrightarrow{\psi=j_1-j_2} H_l(N) \to 0.$$ By Lemma
\ref{expandhomo}, $j_1$ is an isomorphism, then according to Lemma
\ref{exact}, the map $i_2: H_l(\partial N') \to H_l(K)$ is also an
isomorphism. Half is done.

\bigskip
Next, consider the long exact sequence of the pair $(K, \partial
N')$:
$$\cdots \rightarrow  H_l(\partial N')
\stackrel{i_2}\rightarrow H_l(K) \rightarrow H_l(K, \partial
N')\rightarrow  H_{l-1}(\partial N') \stackrel{i_2}\rightarrow
H_{l-1}(K)\cdots$$ Note that $i_2$ involved are isomorphisms, so
$H_l(K,
\partial N') =0$. Then by Poicar\'{e} duality and algebraic duality, we have
$$0 = H_l(K,\partial N') =H^{n-l}(K, \partial N)= {\rm
Hom}(H_{n-l}(K,
\partial N), \RR).$$ Hence $H_{n-l}(K,
\partial N)=0$ for all $l$, or equivalently
$H_l(K, \partial N) =0$ for any integer $l$.

\bigskip
Finally, consider the long exact sequence for the pair $(K,
\partial N)$:
$$\cdots \rightarrow H_{l+1}(K, \partial N) \rightarrow  H_l(\partial N) \rightarrow H_l(K) \rightarrow H_l(K, \partial
N)\rightarrow \cdots.$$ It then follows from $H_{l+1}(K, \partial
N)=H_{l}(K, \partial N)=0$ that $H_l(\partial N) \to H_l(K)$ is an
isomorphism.
\end{proof}

\bigskip
The setting in the above lemma appears in a single attractor, while
the next lemma reveals the homological relations between the two DE
attractors in the non-wandering set. Recall that our assumption is
$S_1=\bigcap_{h=1}^{\infty} f^h(N_1), \  S_2=\bigcap_{h=1}^{\infty}
f^{-h}(N_2).$ Then $\bigcup_{h=1}^{\infty} f^{-h}({\rm Int} N_1) =
M- S_2$, it follows that $f^k(\partial N_2) \subset N_1$, $\ M
\backslash N_1 \subset f^k(N_2)$, for some large integer $k$.
Without loss of generality, we can substitute $N_2$ by $f^k(N_2)$.
Then $$M= N_1 \cup N_2 , \  P:= N_1 \cap N_2 \ {\rm with}\
\partial P= \partial N_1 \cup \partial
N_2.$$ Say $n=\dim M$, then the base manifold $X_1$ of $N_1$ is at
most $n-2$ dimensional (cf. Remark~\ref{pq3}). Hence
$H_{n-1}(N_1)=0$, which implies that $\partial N_2$ separates $N_1$
into two components. One of them is $P$ which contains $\partial
N_1$, while the other part $P'$ should contain $f^k(N_1)$ for some
large integer $k$. Set $N'=f^k(N_1)$, $K=N_1 \backslash {\rm
Int}(N')$. All the notations introduced above are illustrated in
Figure~\ref{mvtwo}.

\begin{figure}[htbp]

\begin{center}
\resizebox{!}{5cm}{
\includegraphics*[0mm, 0mm][60mm,80mm]{Fig2.eps}
}
\caption{\label{mvtwo} The sketch of two intersecting disk
bundles}
\end{center}
\end{figure}

\begin{lemma}
\label{twobundles} For any integer $l$, the two maps $H_l(\partial
N_1)
 \to H_l(P)   \leftarrow H_l(\partial N_2),$ induced by the inclusions $\partial N_1 \subset P,
\partial N_2 \subset P $ respectively are isomorphisms.

In particular, $\beta_l(\partial N_1)= \beta_l(\partial N_2)$ for
any integer $l$.
\end{lemma}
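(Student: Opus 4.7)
The plan is to deduce the lemma from two applications of Lemma~\ref{onebundle}, followed by a Mayer--Vietoris argument and Lemma~\ref{exact}. First, choose $k$ large enough that $N':=f^k(N_1)$ lies in the component $P'$ of $N_1\setminus\partial N_2$ containing $S_1$, and symmetrically $N'':=f^{-k}(N_2)$ lies in the component $Q'$ of $N_2\setminus\partial N_1$ containing $S_2$. Set $K_1=N_1\setminus\mathrm{Int}\,N'$ and $K_2=N_2\setminus\mathrm{Int}\,N''$. Applying Lemma~\ref{onebundle} to $N_1$ with the self-embedding $f^k|_{N_1}$, and to $N_2$ with $f^{-k}|_{N_2}$, yields that the inclusions $\partial N_j\hookrightarrow K_j$ induce isomorphisms $H_l(\partial N_j)\xrightarrow{\cong}H_l(K_j)$ for every $l$, and by the choice of $k$ one has $P\subset K_1\cap K_2$.

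Since the inclusion $\partial N_1\hookrightarrow K_1$ factors through $P$ as $\partial N_1\hookrightarrow P\hookrightarrow K_1$ and the composite is known to be an isomorphism on $H_l$, the first arrow is injective on $H_l$ and the second is surjective; the symmetric statement holds for $N_2$. To promote these to isomorphisms I plan to show that $P\hookrightarrow K_1$ is itself an isomorphism on $H_l$. Writing $K_1=P\cup K_1'$ with $K_1':=P'\setminus\mathrm{Int}\,N'$ and $P\cap K_1'=\partial N_2$, I apply Mayer--Vietoris
\[\cdots\to H_l(\partial N_2)\xrightarrow{(i_1,i_2)}H_l(P)\oplus H_l(K_1')\xrightarrow{j_1-j_2}H_l(K_1)\to\cdots\]
and invoke the variant of Lemma~\ref{exact}(a) provided by part (e): injectivity of $i_2:H_l(\partial N_2)\to H_l(K_1')$ implies injectivity of $j_1:H_l(P)\to H_l(K_1)$, which combined with the already-established surjectivity gives the desired isomorphism. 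Once this is known, Poincar\'e--Lefschetz duality on the $n$-manifold $P$ with boundary $\partial N_1\sqcup\partial N_2$ gives $H_l(P,\partial N_1)\cong H^{n-l}(P,\partial N_2)$, so the vanishing of the former (equivalent to $H_l(\partial N_1)\cong H_l(P)$) forces the vanishing of the latter, yielding the symmetric isomorphism $H_l(\partial N_2)\cong H_l(P)$. The Betti-number equality $\beta_l(\partial N_1)=\beta_l(\partial N_2)$ is then immediate.

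The main obstacle is the sub-claim that $\partial N_2\hookrightarrow K_1'$ is injective on $H_l$ (and indeed an isomorphism). Geometrically this is plausible: $K_1'$ is sandwiched between $\partial N_2$ and $\partial N'$ inside $P'$, every point of $K_1'$ is wandering, and its backward $f^{-j}$-orbit tends to $S_2\subset Q'$, so it eventually exits $P'$ through $\partial N_2$; the ``first-exit'' map then deformation-retracts $K_1'$ onto $\partial N_2$ once transversality is arranged by a small isotopy of $N_2$. An algebraic route translates the claim, via the long exact sequence of the triple $(K_1,P,\partial N_1)$ and excision, into the vanishing $H_*(K_1',\partial N_2)\cong H_{*+1}(K_1,P)=0$, and attempts to derive this by combining the symmetric analyses of $K_1$ and $K_2$ through a Mayer--Vietoris applied to the cover $\{K_1,K_2\}$ of $K_1\cup K_2$. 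This is the delicate heart of the argument.
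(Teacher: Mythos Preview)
Your proposal has a genuine gap at exactly the place you flag: the injectivity of $H_l(\partial N_2)\to H_l(K_1')$. Neither of your sketched routes closes it. The ``first-exit'' deformation retraction of $K_1'$ onto $\partial N_2$ would require that every backward orbit from $K_1'$ meets $\partial N_2$ exactly once before leaving $P'$; but points of $K_1'$ may re-enter $N_1$ many times under $f^{-1}$ (the dynamics is only eventually attracted to $S_2$), so the first crossing of $\partial N_2$ need not be the relevant one, and no amount of transversality fixes this without further control on the wandering set. The algebraic translation you propose, $H_*(K_1',\partial N_2)\cong H_{*+1}(K_1,P)$, is correct, but you give no mechanism to force this to vanish from the $K_1\cup K_2$ Mayer--Vietoris alone.

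The paper sidesteps this obstacle entirely. In your Mayer--Vietoris for $K_1=P\cup K_1'$, the paper exploits the \emph{other} boundary component of $K_1'$: since $\partial N'\subset K_1'\subset K_1$ and Lemma~\ref{onebundle} makes the composite $H_l(\partial N')\to H_l(K_1)$ an isomorphism, the map $H_l(K_1')\to H_l(K_1)$ is surjective. Lemma~\ref{exact}(d)/(e) then gives that $H_l(\partial N_2)\to H_l(P)$ is \emph{surjective}, not injective. Together with the surjection $H_l(P)\to H_l(K_1)$ this yields
\[
\dim H_l(\partial N_2)\ \ge\ \dim H_l(P)\ \ge\ \dim H_l(K_1)\ =\ \dim H_l(\partial N_1).
\]
Running the identical argument for $f^{-1}$ gives the reverse inequality $\dim H_l(\partial N_1)\ge\dim H_l(\partial N_2)$, so all four dimensions coincide; since $H_l(\partial N_1)\to H_l(P)\to H_l(K_1)$ is an isomorphism between equal-dimensional spaces, each arrow is an isomorphism. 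Your Poincar\'e--Lefschetz step is then a clean way to obtain the second isomorphism $H_l(\partial N_2)\cong H_l(P)$ from the first, and is arguably tidier than the paper's appeal to symmetry at that final point; but it only becomes available after the dimension-count argument, which your proposal lacks.
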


\begin{proof}

Applying the Mayer-Vietoris sequence for the pair $(P',P)$, we get
the exact sequence
$$
H_l(\partial N_2) \xrightarrow{(i_1, i_2)} H_l(P') \oplus H_l(P)
\xrightarrow{ j_1 -j_2} H_l(K).
$$

The inclusion $\partial N_1 \subset P \subset K$ induces
$H_l(\partial N_1) \to H_l(P) \stackrel{j_2}\to H_l(K)$. By lemma
\ref{onebundle}, this composition is an isomorphism. Hence $j_2$ is
surjective. Similarly, it follows from the inclusion $\partial N'
\subset P' \subset K$ and Lemma \ref{onebundle} that $j_1$ is
surjective. Moreover, by Lemma \ref{exact}(d),(e), $i_2$ is also
surjective. Then the surjectivity of $i_2$ and $j_2$ gives the
inequality
\begin{equation}\label{ineq1}
\dim H_l(\partial N_2) \geq \dim H_l (P) \geq \dim H_l(K) =\dim
H_l(\partial N_1).\end{equation} If we consider the diffeomorphism
$f^{-1}$ instead, then by symmetry, the above inequality turns
 into \begin{equation}\label{ineq2} \dim H_l(\partial
N_1) \geq \dim H_l(\partial N_2).\end{equation} Combining
(\ref{ineq1}) and (\ref{ineq2}), we get
\begin{equation}\label{alleq}
\dim H_l(\partial N_1)= \dim H_l (P)=\dim H_l(K)=\dim H_l(\partial
N_2).\end{equation}

Now all the homology groups in $H_l(\partial N_1) \to H_l(P)
\stackrel{j_2}\to H_l(K)$ have the same dimension and the
composition is an isomorphism by Lemma~\ref{onebundle}, thus the map
$H_l(\partial N_1) \to H_l(P)$ has to be an isomorphism. By symmetry
$H_l(\partial N_2) \to H_l(P)$ is also an isomorphism.
\end{proof}

\bigskip
Consider the Mayer-Vietoris sequence for the pair $(N_1, N_2):$
\begin{equation}\label{star}
H_l(P) \xrightarrow{\varphi_l=(r_1, r_2)} H_l(N_1) \oplus
H_l(N_2) \xrightarrow{\psi_l= s_1 -s_2} H_l(M).
\end{equation}

\begin{lemma}\label{non-surjective}
$\varphi_l$ is surjective for all $l>0$.
\end{lemma}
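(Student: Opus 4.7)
The plan is to exploit exactness of the Mayer--Vietoris sequence (\ref{star}): since $\mathrm{im}(\varphi_l)=\ker(\psi_l)$, proving that $\psi_l=s_1-s_2$ is the zero map is equivalent to surjectivity of $\varphi_l$. So my goal reduces to showing that each of the two inclusion-induced maps $s_1\colon H_l(N_1)\to H_l(M)$ and $s_2\colon H_l(N_2)\to H_l(M)$ vanishes for all $l>0$.

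For $s_1$, I would use naturality with respect to $f$. Because $f$ carries $N_1$ into itself (indeed $f|N_1$ is conjugate to the hyperbolic bundle embedding $e\colon N_1\to N_1$), functoriality yields the commutative square
$$\begin{CD}
H_l(N_1) @>(f|N_1)_*>> H_l(N_1) \\
@V s_1 VV @VV s_1 V \\
H_l(M) @>f_*>> H_l(M).
\end{CD}$$
The bundle projection $\pi\colon N_1\to X_1$ is a homotopy equivalence and conjugates $(f|N_1)_*$ to $\varphi_*\colon H_l(X_1)\to H_l(X_1)$, where $\varphi$ is the underlying expanding map on $X_1$; by Theorem \ref{main-exp} this $\varphi_*$ is expanding, hence so is $(f|N_1)_*$. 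On the other hand, $f$ is a diffeomorphism of $M$, so $f_*$ is an automorphism of $H_l(M)$. Now I would pass to integral homology modulo torsion, where the induced maps have integer-matrix representatives whose real eigenvalues coincide with those in the diagram above (so the expansion and automorphism hypotheses persist), and apply Lemma \ref{expend5} to conclude that $s_1$ is zero on the free part of $H_l(N_1;\ZZ)$. Tensoring with $\RR$ then gives $s_1=0$ on real homology.

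By the symmetric argument applied to $f^{-1}$ (whose attractor is $S_2$ with defining disk bundle $N_2$ descending to an expanding map on $X_2$), I get $s_2=0$. Therefore $\psi_l\equiv 0$, and exactness finishes the proof. The main technical point I expect to need care with is the passage between real and integer coefficients required to invoke Lemma \ref{expend5}, since that lemma is stated for finitely generated free abelian groups; the rest is a straightforward combination of the naturality square, the homotopy equivalence $N_1\simeq X_1$, and Theorem \ref{main-exp}.
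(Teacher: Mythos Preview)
Your argument is correct and follows essentially the same route as the paper: both use the commutative square coming from $f(N_1)\subset N_1$, invoke Theorem \ref{main-exp} to see that $(f|N_1)_*$ is expanding while $f_*$ is an automorphism, and appeal to Lemma \ref{expend5} (over $\ZZ$) to force $s_1=0$, then symmetry via $f^{-1}$ handles $s_2$. Your explicit passage to integral homology modulo torsion before applying Lemma \ref{expend5} is in fact a bit more careful than the paper's presentation, which writes $H_l(N_1;\ZZ)$ in the diagram without commenting on possible torsion.
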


\begin{proof}
Otherwise $\psi_l$ is not zero for some $l>0$, and then at least one
$s_i$, say $s_1$, is not a zero map. As $f(N_1)\subset N_1$, we have
the following commutative diagram

$$\begin{CD}
 H_l(N_1; \ZZ) @>s_1>> H_l(M; \ZZ) \\
 @V(f|N_1)_*VV   @VVf_*V \\
H_l(N_1; \ZZ) @>>s_1> H_l(M; \ZZ) \end{CD}.$$

Note that $f|N_1$ is a lift of an expanding map on the base manifold
$X_1$ whose induced map on $H_l(X_1; \ZZ)$ is expanding by Theorem
\ref{main-exp} for $l>0$. Hence $(f|N_1)_*: H_l(N_1; \ZZ) \to
H_l(N_1; \ZZ)$ is expanding as well. On the other hand, it follows
from $f: M\to M$ is a diffeomorphism that $f_*: H_l(M; \ZZ) \to
H_l(M; \ZZ)$ is an isomorphism. But $s_1$ is non-zero,  which
contradicts to Lemma \ref{expend5}.
\end{proof}

\bigskip
\noindent {\bf Proof of Theorem \ref{main1}:} Recall that the two
defining disk bundles are $$N_1\cong X_1^{p_1} \tilde{\times}
D^{q_1}, N_2 \cong X_2^{p_2} \tilde{\times} D^{q_2},$$then
$n=p_1+q_1=p_2+q_2$. By symmetry, assume that $2\le q_1\le q_2$.
then $n-2\ge p_1\ge p_2 \ge 1$.

\medskip
\underline{Case 1}: $q_2\ge 3$.

Since $X_j$ is a closed orientable manifold of dimension $p_j$, we
have $\beta_{p_j}(X_j)=1$, ($j=1,2$). Set $l=p_1$, then
$1=\beta_l(X_1) \ge \beta_l(X_2)$.

Consider the diagram $$ \xymatrix{
  H_l(P) \ar[r]^<<<<<<<{\varphi_l}  & H_l(N_1)\oplus H_l(N_2)       \\
  U_2 \subset H_l(\partial N_2) \ar[u]_{i_2}^{\cong} \ar[ur]_{\ \ \phi_2}},
  \label{tridiag}$$
where $\varphi_l$ is defined in (\ref{star}),  $i_2$ is induced by
inclusion $\partial N_2 \subset P$, $U_2$ is given by
Lemma~\ref{euler0} (c), and $\phi_2= \varphi_l \circ i_2$.

Since $X_j$ admits an expanding map, the universal cover of $X_j$ is
contractible by Theorem~\ref{exp} (a), hence  $N_j \simeq X_j$ is a
$K(\pi, 1)$ space. Note that $\phi_2$ is induced by inclusion $g_j:
\partial N_2\to N_j$ . According to Lemma~\ref{euler0}
(c), $U_2$ has a basis $\{ [c_1 \tilde\times S^{q_2-1}], $ $[c_2
\tilde\times S^{q_2-1}], \cdots\}$, where $c_1, c_2, \cdots$ are
$l-q_2+1$ cycles in $X_2$. By Lemma~\ref{shrink}, as $q_2-1\geq 2$,
$g_j$ can be homotoped so that the image of $c_1 \tilde\times
S^{q_2-1}$, $c_2 \tilde\times S^{q_2-1}$, $\cdots$ under $g_j$ are
all of dimension $l-q_2+1$,($j=1,2$). Thus $\phi_2$ maps all these
basis elements to zero in both $H_l(N_1)$ and $H_l(N_2)$, in other
words, $U_2 \subset {\rm ker} \phi_2$.

It follows from Lemma~\ref{euler0} that $H_l(\partial N_2) \cong
H_l(X_2) \oplus H_{l-q_2+1}(X_2)$ while $\dim U_2= \dim
H_{l-q_2+1}(X_2)$. Thus $$\dim {\rm Im} \phi_2 \leq \beta_l(X_2) <
\beta_l(X_1) + \beta_l(X_2)= \beta_l(N_1) + \beta_l(N_2).$$
Consequently $\phi_2$ cannot be surjective.

On the other hand, $i_2$ is an isomorphism by
Lemma~\ref{twobundles}, $\varphi_l$ is surjective by
Lemma~\ref{non-surjective}, so $\phi_2$ must be surjective. We
derive a contradiction.

\underline{Case 2}: $q_1=q_2= 2$.

Consider
$$H_l(X_j)\oplus H_{l-1}(X_j) \xrightarrow{\cong}H_l(P)
\xrightarrow{\varphi_l} H_l(N_1) \oplus H_l(N_2) \xrightarrow{\cong}
H_l(X_1)\oplus H_l(X_2), j=1,2,$$ where the left isomorphism comes
from Lemma~\ref{euler0} (b) and Lemma~\ref{twobundles}, and the
right isomorphism comes from $N_j\simeq X_j$.

The surjectivity of $\varphi_l$ from Lemma~\ref{non-surjective}
implies that \begin{equation} \beta_l(X_j) + \beta_{l-1}(X_j) \geq
\beta_l(X_1) + \beta_l(X_2), j=1,2,\label{dimineq}\end{equation} or
equivalently $\beta_{l-1}(X_1)\ge \beta_{l}(X_2)$ and
$\beta_{l-1}(X_2)\ge \beta_{l}(X_1)$. Let $l$ goes from $1$ to
$n-2$, we get
$$1=\beta_0(X_j)\ge \cdots \ge \beta_{n-3}(X_2)\ge \beta_{n-2}(X_1)=1$$
and  $$1=\beta_0(X_k)\ge \cdots\ge \beta_{n-3}(X_1)\ge
\beta_{n-2}(X_2)=1$$ Therefore $\beta_i(X_1)=\beta_i(X_2)=1$ for
$i=0,...,n-2$.

Now the equality holds in $(\ref{dimineq})$ for $l=1, \cdots , n-2$,
hence in the whole Mayer-Vietoris sequence of $(N_1, N_2)$
$(\ref{star})$, $ \varphi_l$ is an isomorphism, $\psi_l = 0$ for
$l=1, \cdots , n-2$, therefore
$$H_{n-1}(M; \RR)=\cdots=H_1(M;\RR)=0.$$ It follows from the universal
theorem for homology that $$H_{n-1}(M; \QQ)=\cdots=H_1(M;\QQ)=0,$$ i.e. $M$ is a rational
homology sphere. \hfill $\Box$

\bigskip
\noindent {\bf Proof of Corollary \ref{main2}.} Suppose $f: M \to M$
is a diffeomorphism such that $\Omega(f)$ consists of finitely many
oriented DE attractors. Let $N_1$, $N_2$ be defined as in the
beginning of the proof of Theorem \ref{main1}.

(b) By Theorem \ref{main1}, $N_1\cong T^{n-2} \tilde{\times} D^{2}$
and $N_2 \cong T^{n-2} \tilde{\times} D^{2}$. Thus
$\beta_1(N_1)=\beta_1(N_2)=n-2$, and by Lemma~\ref{euler0} (b),
$\beta_1(\partial N_1)=\beta_1(\partial N_2)=n-1$. Now we have
$$\RR^{n-1}=H_1(P)
\xrightarrow{\varphi_1=(r_1, r_2)} H_1(N_1) \oplus
H_1(N_2)=\RR^{2n-4}.$$ Then $\varphi_1$ can not be surjective when
$n\ge 4$, which contradicts to Lemma \ref{non-surjective}. We proved
(b).

(a) When $n=4$, by Theorem \ref{main1}, the dimension of the base
manifolds $X_j$ must be 2. By Theorem \ref{class},
 $X_j$ is covered by Nil-manifold. Since the only orientable closed
2-manifold covered by Nil-manifold is torus, $j=1,2$. So (a) follows
from (b).

(c) is included in (a) and Theorem~\ref{main1}.
 \hfill $\square$

\textbf{Acknowledgement.}\ \ The first three authors are partially
supported by grant No.10631060 of the National Natural Science
Foundation of China, the second author is also partially supported
by NSFC project  60603004. The last author would like to thank
Professor R. Kirby for his continuous support and encouragement.

\bibliographystyle{amsalpha}

\bigskip
\textsc{School of Mathematical Sciences, Peking University, Beijing
100871, P.R.China} \hskip 1true cm \textit{E-mail:}
\verb"dingfan@math.pku.edu.cn"

\bigskip
\textsc{Institute of Mathematics, Academy of Mathematics and Systems
science, Chinese Academy of Sciences,  Beijing 100080 P. R.  China}
\hskip 1true cm \textit{E-mail:} \verb"pjz@amss.ac.cn"


\bigskip
\textsc{School of Mathematical Sciences, Peking University, Beijing
100871, P.R.China} \hskip 1true cm\textit{E-mail:}
\verb"wangsc@math.pku.edu.cn"

\bigskip
\textsc{Department of Mathematics, University of California at
Berkeley, CA 94720, USA} \hskip 1true cm\textit{E-mail:}
\verb"jgyao@math.berkeley.edu"

\end{document}